\theoremstyle{plain}
\newtheorem{thm}{Theorem}
\newtheorem{lem}[thm]{Lemma}
\newtheorem{cor}[thm]{Corollary}
\title{Bounds for approximating lower envelopes with polynomials of degree at most $d$}
\date{}
\author{Jesse Geneson\\
\small\tt geneson@gmail.com\\
}
\begin{document}
\maketitle

\begin{abstract}
Given a lower envelope in the form of an arbitrary sequence $u$, let $LSP(u, d)$ denote the maximum length of any subsequence of $u$ that can be realized as the lower envelope of a set of polynomials of degree at most $d$. Let $sp(m, d)$ denote the minimum value of $LSP(u, d)$ over all sequences $u$ of length $m$. We derive bounds on $sp(m, d)$ using another extremal function for sequences.

A sequence $u$ is called $v$-free if no subsequence of $u$ is isomorphic to $v$. Given sequences $u$ and v, let $LSS(u, v)$ denote the maximum length of a $v$-free subsequence of $u$. Let $ss(m, v)$ denote the minimum of $LSS(u, v)$ over all sequences $u$ of length $m$. By bounding $ss(m, v)$ for alternating sequences $v$, we prove quasilinear bounds in $m^{1/2}$ on $sp(m,d)$ for all $d > 0$.


  \bigskip\noindent \textbf{Keywords:} $0-1$ matrices, forbidden patterns, extremal functions, Davenport-Schinzel sequences, lower envelopes
\end{abstract}

\section{Introduction}

A graph $G$ contains a graph $H$ if some subgraph of $G$ is isomorphic to $H$. Otherwise $G$ avoids $H$. If $G$ and $H$ are graphs, $LSG(G, H)$ denotes the number of edges in the largest subgraph of $G$ that avoids $H$. Let $sg(m, H)$ denote the minimum of $LSG(G, H)$ over all graphs $G$ with $m$ edges. The function $sg(m, H)$ has been bounded for complete bipartite graphs in \cite{fox} and for large subgraphs without short cycles in \cite{fouc}.

In this paper, we investigate a function about lower envelopes that is analogous to $sg$: Given a lower envelope represented as an arbitrary sequence $u$, how much of the lower envelope can be realized using only polynomials of degree at most $k$? 

This problem is closely related to Davenport-Schinzel sequences and finding the maximum possible complexity of a lower envelope of polynomials of degree at most $k$ \cite{ds}. Researchers have proved tight bounds on the maximum possible length of Davenport-Schinzel sequences \cite{shor, niv, pettie}, but the bounds for the maximum possible complexity of a lower envelope of $n$ polynomials of degree at most $k$ are still not tight for $k \geq 3$. 

Shor proved that the maximum possible complexity of a lower envelope of $n$ polynomials of degree at most $4$ is $\Omega(n \alpha(n))$ \cite{shorm}, where $\alpha(n)$ denotes the incredibly slow-growing inverse Ackermann function. However, it is conjectured that the maximum possible complexity is $\Theta(n 2^{\alpha(n)})$ for $k = 4$ and $\Theta(n \alpha(n))$ for $k = 3$. For $k = 1, 2$, it is easy to show that the bounds are $\Theta(n)$. 

Let $LSP(u, k)$ denote the maximum length of any subsequence of $u$ that can be realized as the lower envelope of a set of polynomials of degree at most $k$. Let $sp(m, k)$ denote the minimum value of $LSP(u, k)$ over all sequences $u$ of length $m$. In order to get bounds for this problem, we find bounds on corresponding functions for forbidden $0-1$ matrices and sequences. 

A $0-1$ matrix $A$ contains a $0-1$ matrix $P$ if some submatrix of $A$ is either equal to $P$ or can be changed to $P$ by turning some ones into zeroes. If $P$ is a $0-1$ matrix and $A$ is a $0-1$ matrix, let $LSM(A, P)$ denote the maximum number of ones in a $P$-avoiding $0-1$ matrix $B$ that is contained in $A$. Let $sm(m, P)$ denote the minimum of $LSM(A, P)$ over all $0-1$ matrices $A$ with $m$ ones. 

We identify all matrices $P$ for which $ex(n, P) = O(n)$ and $sm(m, P) \neq \Theta(m^{1/2})$, where $ex(n, P)$ is the maximum number of ones in an $n \times n$ $0-1$ matrix that avoids $P$. For every $0-1$ matrix $P$ such that $ex(n, P) = O(n)$, it is easy to show that $sm(m, P) = O(m^{1/2})$.

A sequence $u$ is called $v$-free if no subsequence of $u$ is isomorphic to $v$. Given sequences $u$ and v, let $LSS(u, v)$ denote the maximum length of a $v$-free subsequence of $u$. Let $ss(m, v)$ denote the minimum of $LSS(u, v)$ over all sequences $u$ of length $m$. 

We say that a sequence $u$ is realized as a lower envelope of a set of functions $\left\{f_{1}, \ldots, f_{n} \right\}$ if $\min \left\{f_{1}(x), \ldots, f_{n}(x) \right\}$ can be expressed as a piecewise function $g(x)$ such that the sequence of functions in $g(x)$ from $-\infty$ to $\infty$ is isomorphic to $u$.

Usually a function $f(n)$ is called \emph{quasilinear} if there exists a constant $t$ such that $f(n) = O( n \log(n)^{t})$. However in this paper, all of the quasilinear functions will satisfy an even stronger property: for each such function $f(n)$, there will exist a constant $t$ such that $f(n) = O(n 2^{\alpha(n)^{t}})$.

It is easy to see that $sp(m, k) \leq ss(m, a_{k+2})$, where $a_{n}$ denotes the alternation $a b a b \ldots$ of length $n$, since two polynomials of degree $k$ can intersect at most $k$ times. We use this inequality to show that $sp(m, k)$ is quasilinear in $m^{1/2}$ for every $k > 0$. 

\section{$0-1$ Matrix Results}

The first result is analogous to a result about complete bipartite graphs \cite{fox}. It is included to introduce a useful probabilistic technique that applies to other $0-1$ matrices, and to use for corollaries about extremal functions of other $0-1$ matrices.

\begin{thm}
If $P$ is an $r \times s$ $0-1$ matrix with all entries equal to $1$ and $s \geq r \geq 2$, then $sm(m, P) = \Theta(m^{r/(r+1)})$.
\end{thm}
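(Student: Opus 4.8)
The bound splits into a lower bound $sm(m,P)=\Omega(m^{r/(r+1)})$ and an upper bound $sm(m,P)=O(m^{r/(r+1)})$, and the ``useful probabilistic technique'' enters in both halves, as random deletion for the former and as a random construction for the latter. The plan for the lower bound is: given any $0\text{-}1$ matrix $A$ with $m$ ones, produce a $P$-avoiding subpattern with $\Omega(m^{r/(r+1)})$ ones by random sampling of the ones. The first step is a counting lemma: \emph{any $0\text{-}1$ matrix with $m$ ones contains $O_{r,s}(m^{(r+s)/2})$ copies of $P$}, i.e.\ that many $r\times s$ all-ones submatrices, with the extremal case being a single all-ones square. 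This follows by a convexity estimate: a copy of $P$ is an $r$-set of rows together with an $s$-set of columns all of whose $rs$ entries are $1$, and double counting over the $\binom{(\#\text{cols})}{r}$ column $r$-subsets (or row $r$-subsets), combined with Jensen applied to the row/column sums and the constraint $\sum(\text{row sum})=m$, bounds the total by $O(m^{(r+s)/2})$. The second step keeps each $1$ of $A$ independently with probability $q$: the expected number of surviving ones is $qm$ and the expected number of surviving copies of $P$ is $q^{rs}\cdot O(m^{(r+s)/2})$, so deleting one $1$ from each surviving copy leaves in expectation $qm-q^{rs}O(m^{(r+s)/2})$ ones in a $P$-avoiding subpattern. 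Since $rs\ge r^2$ and $(r+s)/2\ge r$, taking $q=\Theta(m^{-1/(r+1)})$ makes the second term at most half the first, giving $\Omega(m^{r/(r+1)})$ ones; it is cleanest to run this for $P=J_{r,r}$ and note $LSM(A,J_{r,s})\ge LSM(A,J_{r,r})$, since the exponent equals exactly $r/(r+1)$ when $s=r$.

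For the upper bound I must exhibit a single $m$-ones matrix $A$ for which every subset of more than $Cm^{r/(r+1)}$ of its ones already contains an $r\times s$ all-ones rectangle. Here I would take $A$ to be a random (or suitably pseudorandom) $0\text{-}1$ matrix on an $N\times N$ grid with each entry $1$ independently with probability $p$, where $N$ and $p$ are tuned so that $pN^2\asymp m$; a Chernoff bound gives $\Theta(m)$ ones with high probability. The core claim is that with high probability no subset $S$ of the ones with $|S|=M:=\lceil Cm^{r/(r+1)}\rceil$ is $P$-avoiding. Since any such $S$, viewed as a point set in the grid, is $P$-avoiding iff it is $K_{r,s}$-free as a bipartite graph, and since $S\subseteq A$ has probability exactly $p^{M}$, a union bound gives $\Pr[\exists\text{ bad }S]\le \big(\#\ K_{r,s}\text{-free }M\text{-edge configurations in the }N\times N\text{ grid}\big)\cdot p^{M}$, and it remains to beat this against $p^{-M}$. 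The naive count $\binom{N^2}{M}$, and even the Kővári--Sós--Turán bound applied to $S$ alone, are far too weak, so the plan is to feed in the stronger structural fact that $K_{r,s}$-free graphs lie in few containers of size $\Theta(N^{2-1/r})$, and to choose $N,p$ so that the resulting container estimate multiplied by $p^{M}$ is still $o(1)$ all the way down to $M=\Theta(m^{r/(r+1)})$ --- this calibration is what forces the exponent.

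The hard part is exactly this last step. The difficulty is structural: the largest $K_{r,s}$-free configurations in an $N\times N$ grid have size $\Theta(N^{2-1/r})$, which is polynomially \emph{larger} than the target $M\asymp m^{r/(r+1)}$, so the construction cannot be ``locally like'' an all-ones block (which only certifies $LSM=O(m^{1-1/(2r)})$) and a coarse random matrix of constant density does no better; one needs a host in which no atypically large sparse $K_{r,s}$-free subgraph survives, and a container count precise enough to see this. I expect the main technical work to be (i) pinning down the admissible window of $(N,p)$ where the union bound closes, (ii) the supersaturation/container estimate for the number of large $K_{r,s}$-free subsets of the grid, and (iii) the $s>r$ case, where avoiding $J_{r,s}$ is strictly weaker than avoiding $J_{r,r}$, so the construction must be shown to destroy the larger pattern $J_{r,s}$, not merely $J_{r,r}$; the boundary regimes (where a candidate $N$ would push $p$ out of range, or where $m$ is small relative to $N$) should then be disposed of by hand.
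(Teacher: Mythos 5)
Your lower bound is essentially the paper's argument: count copies of $P$ in an arbitrary host with $m$ ones, keep each one independently with probability $q=\Theta(m^{-1/(r+1)})$, delete a one from each surviving copy, and reduce to $J_{r,r}$ since avoiding it is harder than avoiding $J_{r,s}$. (The paper uses the cruder count of at most $\binom{m}{r}<m^r$ copies, each copy being determined by its $r$ diagonal ones, which already suffices; your $O(m^{(r+s)/2})$ convexity refinement is not needed.) That half is fine.

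The upper bound is where there is a genuine gap. You correctly observe that the \emph{square} all-ones block only certifies $O(m^{1-1/(2r)})$ via K\H{o}v\'ari--S\'os--Tur\'an, but you conclude from this that no all-ones host can work and then propose a random host analyzed by a container-plus-union-bound scheme that you leave entirely uncalibrated and unproven --- you yourself flag steps (i)--(iii) as open. The missing idea is much simpler: unbalance the host. The paper takes $A$ to be the all-ones rectangle with $m^{r/(r+1)}$ rows and only $m^{1/(r+1)}$ columns. In any $P$-free subpattern with $t$ ones, count pairs consisting of a row together with an $r$-subset of the columns in which that row has ones: each $r$-subset of columns can occur for at most $s-1$ rows (else one finds an all-ones $s\times r$ submatrix), so $\sum_i\binom{w_i}{r}<s\binom{m^{1/(r+1)}}{r}=O_{r,s}(m^{r/(r+1)})$, while convexity gives $\sum_i\binom{w_i}{r}\ge m^{r/(r+1)}\binom{t/m^{r/(r+1)}}{r}$. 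Because the number of columns is only $m^{1/(r+1)}$, the right-hand side is comparable to the number of rows, forcing the average row weight $t/m^{r/(r+1)}$ to be $O_{r,s}(1)$ and hence $t=O(m^{r/(r+1)})$. So the exponent comes from choosing the aspect ratio of a deterministic all-ones rectangle, not from any supersaturation or container input; as written, your upper bound does not constitute a proof.
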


This result follows from the next two lemmas, very similar to the proofs in \cite{fox}.

\begin{lem}\label{prob}
If $P$ is an $r \times r$ $0-1$ matrix with all entries equal to $1$ and $r \geq 2$, then $sm(m, P) = \Omega(m^{r/(r+1)})$.
\end{lem}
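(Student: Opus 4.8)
We want to show that for every $0$-$1$ matrix $A$ with $m$ ones, some $P$-avoiding submatrix of $A$ retains $\Omega(m^{r/(r+1)})$ ones, where $P = J_{r\times r}$ is the all-ones $r\times r$ matrix. The plan is to use the probabilistic deletion method, exactly as in the Kővári–Sós–Turán / Fox style argument for complete bipartite graphs: pick a random sub-structure, bound the expected number of copies of $P$, destroy one entry per copy, and optimize the sampling probability.

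First I would fix a parameter $p \in (0,1]$ to be chosen later, and form a random submatrix $B$ of $A$ by independently keeping each row with probability $p$ and each column with probability $p$ (keeping a retained entry iff both its row and column survive). Then $\mathbb{E}[\#\text{ones in } B] = p^{2} m$. Next I would bound the expected number of copies of $P$ in $B$: a copy of $P$ is supported on $r$ chosen rows and $r$ chosen columns all of whose $r^{2}$ intersection entries are ones in $A$; each such potential copy in $A$ survives into $B$ with probability $p^{2r}$, so if $N_{P}(A)$ denotes the number of (row-set, column-set) copies of $P$ in $A$, then $\mathbb{E}[\#\text{copies of }P\text{ in }B] = p^{2r} N_{P}(A)$. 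Deleting one $1$-entry from each surviving copy yields a $P$-avoiding matrix with expected number of ones at least $p^{2}m - p^{2r}N_{P}(A)$, and hence $sm(m,P) \ge p^{2}m - p^{2r}N_{P}(A)$ for the worst-case $A$.

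The remaining step is to control $N_{P}(A)$ in terms of $m$. Since $A$ itself is an arbitrary $0$-$1$ matrix with $m$ ones (we may assume $A$ is square of side $O(m)$, or simply take it to be $n\times n$ with $n$ as large as needed and $m$ ones placed adversarially), the trivial bound $N_{P}(A) \le \binom{m}{r^{2}} = O(m^{r^{2}})$ is far too weak; instead I would use the extremal bound for $P = J_{r\times r}$, namely the matrix analogue of Kővári–Sós–Turán, which gives that an $n\times n$ matrix with many ones is forced to contain many copies of $J_{r\times r}$ — quantitatively, by convexity (Jensen applied to $\binom{\cdot}{r}$ on the column-sums of row-$r$-tuples, or directly a supersaturation argument), $N_{P}(A) = O\!\left(m^{2r}/n^{\,?}\right)$; to avoid an extra parameter it is cleanest to bound $N_P(A)$ purely in terms of $m$. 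Taking $A$ on an $m\times m$ grid, one gets $N_P(A) = O(m^{r+1})$: there are at most $\binom{m}{r}$ choices of $r$ rows, and for a fixed row $r$-tuple the number of columns meeting all of them at a $1$ is at most the number of ones divided among rows, giving on average $O(1)$ and in aggregate $O(m)$ such "heavy" columns, hence $O(m^{r})$ column $r$-subsets summed against $\binom{m}{r}$ row choices — careful application of Jensen yields $N_P(A) = O(m^{r+1})$ when $A$ has $m$ ones. Plugging this in, $sm(m,P) \ge p^{2}m - C p^{2r} m^{r+1}$, and choosing $p = c\, m^{-(r-1)/(2(r-1))} = c\,m^{-1/2}\cdot m^{1/(2)}$—more precisely $p \asymp m^{-(r+1-1)/(2r-2)}$ optimized so the two terms balance—gives $p \asymp m^{-(r-1)/(2r-2)}$ and a final bound of order $m^{r/(r+1)}$ after simplification.

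The main obstacle I anticipate is getting the supersaturation bound on $N_{P}(A)$ with the right exponent using only $m$ (and not a separately chosen dimension $n$): one must argue that the worst case for $sm$ is essentially a near-uniform $\sqrt{m}\times\sqrt{m}$ block, apply Jensen's inequality to the column sums to count ones in $J_{r\times r}$, and then verify that the exponents combine to exactly $r/(r+1)$; the bookkeeping between the two free parameters ($p$ and the effective side length) is where the argument of \cite{fox} must be adapted with care rather than quoted verbatim.
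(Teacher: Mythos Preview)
Your deletion strategy is the right general idea, but the particular random model you chose---sampling rows and columns independently---cannot give the exponent $r/(r+1)$, and this is a genuine gap rather than a bookkeeping issue.

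To see why, test your inequality $sm(m,P)\ge p^{2}m - p^{2r}N_{P}(A)$ on the $\sqrt{m}\times\sqrt{m}$ all-ones matrix. There $N_{P}(A)=\binom{\sqrt{m}}{r}^{2}=\Theta(m^{r})$, and this is in fact the correct order of magnitude for the worst case (any copy of $J_{r\times r}$ is determined by the $r$ ones on its main diagonal, so $N_{P}(A)\le\binom{m}{r}<m^{r}$ always). Balancing $p^{2}m$ against $p^{2r}m^{r}$ forces $p\asymp m^{-1/2}$, and then $p^{2}m\asymp 1$: the bound is trivial. Your attempt to push $N_{P}(A)$ down to $O(m^{r+1})$ via supersaturation goes in the wrong direction (the true bound $O(m^{r})$ is already \emph{smaller}, yet still not small enough), and the subsequent choice of $p$ in your write-up does not actually simplify to anything giving $m^{r/(r+1)}$.

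The fix used in the paper is to sample \emph{entries}, not rows and columns: keep each one of $A$ independently with probability $p$. Now the expected number of surviving ones is $pm$, and a copy of $J_{r\times r}$ survives only if all $r^{2}$ of its entries do, which happens with probability $p^{r^{2}}$. Combined with the diagonal bound $N_{P}(A)<m^{r}$, the deletion argument gives
\[
sm(m,P)\ \ge\ pm - p^{r^{2}}m^{r},
\]
and taking $p=\tfrac{1}{2}m^{-1/(r+1)}$ (so that $p^{r^{2}-1}m^{r-1}=(\tfrac12)^{r^{2}-1}<\tfrac12$) yields $\Omega(m^{r/(r+1)})$. No supersaturation or dimension parameter is needed; the entire argument is the two-line count $N_{P}(A)<m^{r}$ plus the higher survival exponent $r^{2}$ coming from entry-sampling.
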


\begin{proof}
Let $A$ be any $0-1$ matrix with $m$ ones. First we claim that the maximum possible number of copies of $P$ in $A$ is less than $m^{r}$. To see this, note that any copy of $P$ is defined by the $r$ ones on its main diagonal. Among the $m$ ones in $A$, there are at most ${m \choose r}$ ways to choose $r$ ones, which is less than $m^{r}$.

Now create a matrix $A'$ from $A$ by not changing the zeroes in $A$, and changing each one in $A$ to zero with probability $1-p$, such that $p = \frac{1}{2}m^{-1/(r+1)}$. The expected number of ones in $A'$ is $m p$ and the expected number of copies of $P$ in $A'$ is at most $p^{r^{2}} m^{r}$. Then there is a $P$-free $0-1$ matrix contained in $A$ that has at least $m p - p^{r^{2}} m^{r} = \Omega(m^{r/(r+1)})$ ones on average. Therefore there exists a choice of $A'$ which produces a $P$-free subgraph of size $\Omega(m^{r/(r+1)})$.
\end{proof}

\begin{lem}
If $P$ is an $s \times r$ $0-1$ matrix with all entries equal to $1$ and $s \geq r \geq 2$, then $sm(m, P) = O(m^{r/(r+1)})$.
\end{lem}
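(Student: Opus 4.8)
The plan is to prove the bound by exhibiting, for each $m$, one particular $0-1$ matrix $A$ with exactly $m$ ones for which $LSM(A,P)=O(m^{r/(r+1)})$; since $sm(m,P)$ is the minimum of $LSM(\cdot,P)$ over all $0-1$ matrices with $m$ ones, this suffices. Mirroring the complete-bipartite-graph construction of \cite{fox}, I would take $A$ to be an all-ones matrix of dimensions $k\times n$, with $k=\Theta(m^{r/(r+1)})$ and $n=\lceil m/k\rceil=\Theta(m^{1/(r+1)})$, from which I delete $kn-m$ of the ones so that $A$ has exactly $m$ ones; since $A$ is then contained in the all-ones $k\times n$ matrix $J$, we have $LSM(A,P)\le LSM(J,P)$. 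The one idea worth emphasizing is that the \emph{aspect ratio} is doing the work: the all-ones square matrix with $m$ entries would only yield, by the same double count, the bound $O(m^{1-1/(2r)})$, which for $r\ge 2$ exceeds $m^{r/(r+1)}$.

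Next I would bound $LSM(J,P)$. Since $P$ has all entries equal to $1$, a $0-1$ matrix avoids $P$ exactly when it has no $s\times r$ all-ones submatrix, and any $0-1$ matrix contained in $J$ is simply an arbitrary $0-1$ matrix with at most $k$ rows and at most $n$ columns; so $LSM(J,P)$ is exactly a Zarankiewicz-type quantity, namely the largest number of ones in a $k\times n$ $0-1$ matrix having no $s\times r$ all-ones submatrix. I would bound this by the standard double count. If $B$ is such a matrix and $d_i$ is the number of ones in its $i$-th row, then for each of the $\binom{n}{r}$ sets of $r$ columns, at most $s-1$ rows of $B$ are all-ones on those columns (otherwise those $s$ rows together with those $r$ columns form a copy of $P$), so $\sum_{i=1}^{k}\binom{d_i}{r}\le (s-1)\binom{n}{r}$. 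Discarding the rows with $d_i<r$, which contribute at most $(r-1)k$ ones in total, using $\binom{d}{r}\ge (d/r)^{r}$ for $d\ge r$, and applying H\"older's inequality over the at most $k$ remaining rows then gives $\sum_{i}d_i=O_{r,s}\!\bigl(k+k^{1-1/r}n\bigr)$.

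Finally I would substitute the chosen $k$ and $n$ into $k+k^{1-1/r}n$ and verify that both terms are $\Theta(m^{r/(r+1)})$; this is the point of the choice, as $r/(r+1)$ is exactly the exponent minimizing $k+mk^{-1/r}$ over $k$ with $kn=m$. That gives $LSM(A,P)\le LSM(J,P)=O(m^{r/(r+1)})$, proving the lemma. (Values of $m$ below a constant depending on $r$ and $s$ are trivial, since then $m=O(1)=O(m^{r/(r+1)})$.) Combined with Lemma~\ref{prob} --- which applies because every copy of $P$ contains an $r\times r$ all-ones submatrix, so any matrix that avoids the $r\times r$ all-ones matrix also avoids $P$ --- this would complete the proof of Theorem~1.

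I do not expect a genuinely hard step: the substance is the classical Zarankiewicz double count plus a one-variable optimization. The only care needed is bookkeeping --- choosing $k$ and $n$ so that $A$ has precisely $m$ ones, keeping the low-degree rows out of the way, and checking that the implied constants depend only on $r$ and $s$.
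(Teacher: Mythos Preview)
Your proposal is correct and follows essentially the same approach as the paper: both take an all-ones $\Theta(m^{r/(r+1)})\times\Theta(m^{1/(r+1)})$ matrix and bound the number of ones in a $P$-free submatrix via the standard K\H{o}v\'ari--S\'os--Tur\'an/Zarankiewicz double count $\sum_i\binom{d_i}{r}\le (s-1)\binom{n}{r}$. The only cosmetic difference is that the paper applies Jensen/convexity directly to the binomial sum rather than your route through $\binom{d}{r}\ge(d/r)^r$ plus H\"older, and it is less fastidious about integer dimensions and rows with $d_i<r$.
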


\begin{proof}
Let $A$ be an $m^{r/(r+1)} \times m^{1/(r+1)}$ matrix with every entry equal to $1$, and let $A'$ be a $P$-free $0-1$ matrix contained in $A$.

Let $t$ be the number of ones in $A'$, let $R$ denote the rows of $A'$, and let $w_{i}$ denote the number of ones in row $i$ of $A'$ for each $i \in R$. By convexity, $m^{r/(r+1)} \times {t/m^{r/(r+1)} \choose r} \leq \sum_{i \in R} {w_{i} \choose r}$. By the pigeonhole principle, $\sum_{i \in R} {w_{i} \choose r} < s {m^{1/(r+1)} \choose r}$. 
\end{proof}

The next two lemmas give operations on $0-1$ matrices that only change $sm(m, P)$ by at most a constant factor, with some corollaries.

\begin{lem}
Suppose that $P$ is a $0-1$ matrix with a row $r$ that has a pair of adjacent ones in columns $c$ and $c+1$. Let $R$ be the $0-1$ matrix obtained by inserting a new column between columns $c$ and $c+1$ with a one in row $r$ and zeroes elsewhere. Then $sm(m,P) \leq sm(m, R) \leq 2sm(m, P)$.
\end{lem}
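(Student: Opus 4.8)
Both inequalities are really statements about a single matrix $A$, which then pass to $sm$ by taking the minimum over all $A$ with $m$ ones, so my plan is to prove, for every $0$-$1$ matrix $A$, that $LSM(A,P)\le LSM(A,R)\le 2\,LSM(A,P)$.

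For the left inequality $sm(m,P)\le sm(m,R)$: I would note that deleting the inserted column from $R$ returns exactly $P$, so any matrix that contains $R$ also contains $P$; equivalently every $P$-free matrix is $R$-free. Hence a largest $P$-free submatrix of $A$ is in particular an $R$-free submatrix of $A$, giving $LSM(A,P)\le LSM(A,R)$ for every $A$, and minimizing over all $A$ with $m$ ones yields $sm(m,P)\le sm(m,R)$. This direction is routine.

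For the right inequality $sm(m,R)\le 2\,sm(m,P)$, the plan is to prove the pointwise bound $LSM(A,R)\le 2\,LSM(A,P)$ and then minimize. Fix $A$, let $B$ be an $R$-free submatrix of $A$ with $LSM(A,R)$ ones, and split $B$ into two sparser matrices: for each row of $B$, list its ones from left to right, and let $B_{1}$ (respectively $B_{2}$) be the matrix on the same rows and columns as $B$ that keeps only the odd-position ones (respectively even-position ones) of each row and zeroes out the rest. Then $B_{1}$ and $B_{2}$ are submatrices of $A$ and $\mathrm{ones}(B)=\mathrm{ones}(B_{1})+\mathrm{ones}(B_{2})$, so one of them has at least $LSM(A,R)/2$ ones; it then suffices to show each of $B_{1},B_{2}$ is $P$-free, which gives $LSM(A,P)\ge LSM(A,R)/2$. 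To see $B_{1}$ is $P$-free, suppose a copy of $P$ sits in $B_{1}$ on rows $\rho_{1}<\dots<\rho_{l}$ and columns $\gamma_{1}<\dots<\gamma_{k}$, with $P$'s row $r$ going to some $\rho_{i_{0}}$ and $P$'s columns $c,c+1$ going to $\gamma_{c},\gamma_{c+1}$. Since $P$ has ones at $(r,c)$ and $(r,c+1)$, the entries $B_{1}[\rho_{i_{0}},\gamma_{c}]$ and $B_{1}[\rho_{i_{0}},\gamma_{c+1}]$ are ones, hence are odd-position ones of row $\rho_{i_{0}}$ of $B$; because $\gamma_{c}<\gamma_{c+1}$ there is an even-position one of that row in some column $\delta$ with $\gamma_{c}<\delta<\gamma_{c+1}$ (so $\delta$ is distinct from every $\gamma_{j}$). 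The submatrix of $B$ on rows $\rho_{1}<\dots<\rho_{l}$ and columns $\gamma_{1}<\dots<\gamma_{c}<\delta<\gamma_{c+1}<\dots<\gamma_{k}$ then contains $R$: the $\gamma_{j}$ carry the copy of $P$ since the ones of $B$ dominate those of $B_{1}$, and column $\delta$ supplies the inserted column's single one in row $r$. This contradicts $R$-freeness of $B$, so $B_{1}$ is $P$-free; the argument for $B_{2}$ is symmetric, with the skipped one between $\gamma_{c}$ and $\gamma_{c+1}$ now being an odd-position one.

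I expect the delicate point to be exactly the choice of how to split $B$ into two sparser matrices. A column-based split (or the seemingly natural move of duplicating columns of an $sm$-optimal matrix for $P$) breaks down, because a column lying between $\gamma_{c}$ and $\gamma_{c+1}$ need not carry a one in row $r$, so it cannot be used to reconstruct the single one of $R$'s inserted column. Splitting instead by parity of position among the ones \emph{within each row} is what forces the skipped one to lie in row $r$ and strictly between columns $\gamma_{c}$ and $\gamma_{c+1}$, which is precisely the placement $R$ demands; after this observation everything else is bookkeeping with submatrix containment.
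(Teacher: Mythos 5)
Your proposal is correct and uses essentially the same argument as the paper: split the ones of each row by parity of position, so that a copy of $P$ among same-parity ones forces a skipped one in the same row strictly between the images of columns $c$ and $c+1$, which supplies the inserted column of $R$. The only cosmetic difference is that you prove the pointwise bound $LSM(A,R)\leq 2\,LSM(A,P)$ for every $A$ and then minimize, while the paper argues directly on a matrix $A$ that is extremal for $sm(m,P)$; you also spell out the routine left inequality, which the paper leaves implicit.
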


\begin{proof}
Let $A$ be a $0-1$ matrix with $m$ ones such that every $0-1$ matrix contained in $A$ with at least $sm(m,P)+1$ ones contains $P$. Suppose that $A'$ is a $0-1$ matrix contained in $A$ with $2sm(m, P)+1$ ones. Let $B$ be obtained from $A'$ by changing every other one in each row to zeroes, starting with the second one in each row. Since $B$ has at least $sm(m, P)+1$ ones, choose a copy of $P$ in $B$. This corresponds to a copy of $R$ in $A'$.
\end{proof}

\begin{cor}
If $P$ is an $r \times s$ $0-1$ matrix with all entries equal to $1$ and $s \geq r \geq 2$ and $R$ is obtained by inserting a new column between two columns of $P$ with a single one in the new column, then $sm(m, R) = \Theta(m^{r/(r+1)})$.
\end{cor}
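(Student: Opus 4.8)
The plan is to deduce this directly from Theorem~1 together with the preceding lemma on single--one column insertions. By Theorem~1 the all--ones $r \times s$ matrix $P$ with $s \geq r \geq 2$ satisfies $sm(m, P) = \Theta(m^{r/(r+1)})$, so it suffices to show that $sm(m, R)$ and $sm(m, P)$ differ by at most a constant factor, and that lemma is tailored for exactly this situation.

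First I would check that $R$ arises from $P$ by precisely the operation described in that lemma. Write the new column of $R$ as sitting between adjacent columns $c$ and $c+1$ of $P$, with its unique one in some row $i$. Since every entry of $P$ equals $1$ and $P$ has at least two columns (as $s \geq r \geq 2$), row $i$ of $P$ has ones in the adjacent columns $c$ and $c+1$. Hence $R$ is exactly the matrix obtained from $P$ by inserting between columns $c$ and $c+1$ a new column with a one in row $i$ and zeroes elsewhere, so the lemma applies and gives $sm(m, P) \leq sm(m, R) \leq 2\, sm(m, P)$.

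Combining this chain of inequalities with the value of $sm(m, P)$ furnished by Theorem~1 yields $sm(m, R) = \Theta(m^{r/(r+1)})$, the extra factor of $2$ being absorbed into the $\Theta$. There is essentially no obstacle here: the only point needing a word of care is that the choice of which row receives the single new one, and between which pair of adjacent columns the new column is inserted, is immaterial, precisely because $P$ is all--ones and therefore has a pair of adjacent ones in every row, so the hypothesis of the insertion lemma is satisfied automatically in every case.
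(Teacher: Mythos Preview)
Your proposal is correct and follows exactly the argument the paper intends: the corollary is stated immediately after the column-insertion lemma (Lemma~4) precisely because combining that lemma with Theorem~1 gives the result, and your check that the hypotheses of the lemma are satisfied (since $P$ is all-ones) is the only thing needing verification.
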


\begin{lem}
Suppose that $P$ is a $0-1$ matrix with ones in both its bottom left and top right corners. Let $R$ be the $0-1$ matrix obtained by joining two copies of $P$ so that they overlap at the bottom left and top right corners and filling with zeroes elsewhere. Then $sm(m,P) \leq sm(m, R) \leq 2sm(m, P)$.
\end{lem}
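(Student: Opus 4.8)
The plan is to mimic the proof of the previous lemma (the column-insertion lemma), but now the "thinning" operation we perform on a subpattern has to act along the diagonal joining the bottom-left and top-right corners rather than within a single row. For the lower bound $sm(m,R) \ge sm(m,P)$, observe that $P$ is a submatrix of $R$ (either of the two overlapping copies witnesses this), so any $0$-$1$ matrix avoiding $P$ also avoids $R$; hence for the extremal $A$ realizing $sm(m,P)$, every submatrix of $A$ with more than $sm(m,P)$ ones contains $P$, so in particular contains $R$, giving $LSM(A,R) \le \ldots$ — wait, this is the wrong direction, so instead I would argue as in the excerpt: take $A$ with $m$ ones such that every submatrix with at least $sm(m,P)+1$ ones contains $P$; since containing $R$ implies containing $P$, the same $A$ shows $sm(m,R) \le sm(m,P)$ is \emph{false} — rather, I want the matrix forcing $R$. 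So for $sm(m,P) \le sm(m,R)$ I would take the extremal $A$ for $R$: if every submatrix of $A$ with many ones contains $R$, does every such submatrix contain $P$? Yes, trivially, since $P \subseteq R$. Hence $sm(m,P) \le sm(m,R)$.

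For the upper bound $sm(m,R) \le 2\,sm(m,P)$, I would take a $0$-$1$ matrix $A$ with $m$ ones all of whose submatrices with at least $sm(m,P)+1$ ones contain $P$, and let $A'$ be any submatrix of $A$ with $2\,sm(m,P)+1$ ones. The key step is to define a "coloring" of the ones of $A'$ into two classes so that each class has at least $sm(m,P)+1$ ones and so that a copy of $P$ confined to one class, when reassembled with a copy from the complementary region in the appropriate overlapping fashion, yields a copy of $R$ in $A'$. Concretely, the natural device is to two-color the \emph{rows} (or \emph{columns}) of $A'$ — say alternately, or by a partition into a "left half" and "right half" of the column set — so that a $P$ sitting in one half occupies columns strictly to the left of those available to a $P$ sitting in the other half, mirroring how $R$'s two copies of $P$ sit side by side overlapping only at the single shared corner column/row. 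One of the two halves carries at least half the ones, i.e.\ at least $sm(m,P)+1$, so it contains a copy of $P$; the shared bottom-left/top-right corner one of that copy then serves as the overlap vertex, and I would need to exhibit a second copy of $P$ (possibly degenerate, using just the shared corner) to complete $R$.

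The main obstacle I anticipate is exactly this last point: $R$ is \emph{two} overlapping copies of $P$, not one copy plus an isolated extra one, so finding a single copy of $P$ in a thinned submatrix is not obviously enough. I expect the correct reading is that the thinning must be set up so the copy of $P$ we find already contains, along its own diagonal, two sub-copies of $P$ overlapping at a corner — which suggests the coloring should not halve the whole matrix but rather split each potential occurrence of $R$: color the columns so that within any set of columns that could host $R$, the left copy's private columns get one color and the right copy's private columns get the other, with the shared column available to both. Then a monochromatic $P$ plus the shared column's one plus the forced structure on the other side reconstitutes $R$. Making this precise — in particular checking that the $P$-copy guaranteed by the inductive/extremal hypothesis in one color class actually extends across the shared corner into the other class to form $R$, and that the factor lost is exactly $2$ — is where the real work lies; the row/column bookkeeping is otherwise routine and parallels the preceding lemma.
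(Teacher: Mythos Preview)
Your handling of the lower inequality $sm(m,P)\le sm(m,R)$ is correct once you sort out the direction: $P$ is contained in $R$, so every $P$-free submatrix is automatically $R$-free, whence $LSM(A,P)\le LSM(A,R)$ for every $A$, and minimizing over $A$ gives the inequality.

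For the upper inequality you have correctly identified the obstacle but not resolved it, and the static two-colouring approach you sketch cannot be made to work. Any fixed partition of the ones of $A'$ into two classes yields at best one copy of $P$ in the majority class; nothing in such a scheme forces a corner of that copy to coincide with the opposite corner of a \emph{second} copy of $P$, and that coincidence is precisely what $R$ demands. Unlike the previous lemma, where the inserted column sits between two ones already present in the single copy of $P$ you find, here the second copy of $P$ must be produced from scratch and anchored at a prescribed entry. The key idea you are missing is not a partition but an \emph{iterative deletion}. Starting from $A'\subseteq A$ with $2\,sm(m,P)+1$ ones, find a copy of $P$ and delete only the one at its top-right corner; repeat. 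After $sm(m,P)+1$ iterations the set $D$ of deleted ones is itself a submatrix of $A$ with $sm(m,P)+1$ ones, so $D$ contains a copy of $P$. Look at the bottom-left corner of that copy: it is a deleted entry, hence at the moment of its deletion it was the top-right corner of some copy $P_1$ of $P$ lying entirely in rows at or below it and columns at or to its left. The copy of $P$ inside $D$ lies in rows at or above and columns at or to the right of that same entry. These two copies share exactly that single corner and together witness $R$ in $A'$.
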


\begin{proof}
Let $A$ be a $0-1$ matrix with $m$ ones such that every $0-1$ matrix contained in $A$ with at least $sm(m,P)+1$ ones contains $P$. Suppose that $A'$ is a $0-1$ matrix contained in $A$ with $2sm(m, P)+1$ ones. Choose a copy of $P$ in $A'$ and delete the one in the bottom right corner of $P$. Do this a total of $sm(m, P)+1$ times. The deleted entries form a copy of $P$, so $A'$ contains $R$.
\end{proof}

\begin{cor}
If $P$ is an $r \times s$ $0-1$ matrix with all entries equal to $1$ and $s \geq r \geq 2$ and $R$ is obtained by joining multiple copies of $P$ at their bottom left corners and top right corners and filling with zeroes elsewhere, then $sm(m, R) = \Theta(m^{r/(r+1)})$.
\end{cor}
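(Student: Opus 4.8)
The plan is to obtain the corollary by iterating the preceding lemma and then quoting Theorem~1. Since every entry of $P$ equals $1$, $P$ has ones in both its bottom left and top right corners, so the lemma applies to $P$. Set $R_1 = P$, and for each $k \geq 2$ let $R_k$ be the matrix obtained by gluing a new copy of $P$ onto $R_{k-1}$ so that the top right corner of $R_{k-1}$ is identified with the bottom left corner of the new copy, with zeroes everywhere else; this is the ``staircase'' of $k$ copies of $P$ referred to in the statement, and $R$ is $R_k$ for the relevant constant $k$.

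First I would check that the lemma is applicable at every stage of the iteration. The bottom left corner of $R_{k-1}$ coincides with the bottom left corner of the first copy of $P$, and the top right corner of $R_{k-1}$ coincides with the top right corner of the last copy of $P$, so both are ones; hence the lemma yields $sm(m, R_{k-1}) \leq sm(m, R_k) \leq 2\, sm(m, R_{k-1})$ for every $k \geq 2$.

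Chaining these inequalities from $R_1$ up to $R_k$ gives $sm(m, P) \leq sm(m, R_k) \leq 2^{k-1}\, sm(m, P)$. Because $k$ is a fixed constant (the number of copies being joined), the factor $2^{k-1}$ is absorbed into the asymptotic notation, so $sm(m, R) = \Theta(sm(m, P))$. By Theorem~1, $sm(m, P) = \Theta(m^{r/(r+1)})$ since $P$ is an $r \times s$ all-ones matrix with $s \geq r \geq 2$, and combining the two estimates finishes the proof.

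I do not anticipate a genuine obstacle. The only points requiring care are verifying that each intermediate matrix $R_{k-1}$ really does have ones in the two corners named by the lemma, so that the induction is legitimate, and confirming that ``joining multiple copies'' in the statement is precisely the iterate of the two-copy join from the lemma rather than some other overlap pattern; once the staircase picture is pinned down, both are immediate.
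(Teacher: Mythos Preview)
Your plan is exactly what the paper intends: iterate the preceding lemma and then invoke Theorem~1; the paper states the corollary without a separate proof.

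One small point to tighten: the lemma as stated joins two copies of the \emph{same} matrix, so applying it with $R_{k-1}$ in the role of $P$ produces $R_{2(k-1)}$, not $R_k$. The inequality you write, $sm(m,R_{k-1})\le sm(m,R_k)\le 2\,sm(m,R_{k-1})$, is therefore not literally what the lemma delivers once $k\ge 3$, since your inductive step glues $R_{k-1}$ to a copy of $P$, two different patterns. This is easily repaired in either of two ways. You can observe that the proof of the lemma never actually uses that the two blocks being joined are identical, only that each has a one in the relevant corner, so your step goes through once that is noted. Alternatively, apply the lemma verbatim to double repeatedly, obtaining $sm(m,R_{2^j})\le 2^{j}\,sm(m,P)$, and then use monotonicity of $sm$ under containment (if $Q$ is contained in $Q'$ then any $Q$-free matrix is $Q'$-free, so $sm(m,Q)\le sm(m,Q')$): since $P=R_1$ is contained in $R_k$ and $R_k$ is contained in $R_{2^j}$ for $2^j\ge k$, you get $sm(m,P)\le sm(m,R_k)\le 2^{j}\,sm(m,P)$ with $j=\lceil\log_2 k\rceil$ a constant. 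Either route completes the argument.
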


Both of the operations in the last two lemmas are analogous to operations for the Tur\'{a}n extremal function $ex(n, P)$ \cite{kes}. The next lemma shows that the probabilistic technique from Lemma \ref{prob} can give tight lower bounds on $sm(m, P)$ for other $0-1$ matrices $P$.

\begin{lem}\label{lshape}
If $P = \begin{bmatrix} 1 & 0 \\ 1 & 1 \end{bmatrix}$, then $sm(m, P) = \Omega(m^{1/2})$.
\end{lem}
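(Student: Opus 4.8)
The plan is to use the probabilistic deletion technique of Lemma~\ref{prob}, adapted to the matrix $P = \begin{bmatrix} 1 & 0 \\ 1 & 1 \end{bmatrix}$. Let $A$ be an arbitrary $0$-$1$ matrix with $m$ ones; I want to exhibit a $P$-free submatrix with $\Omega(m^{1/2})$ ones. The first step is to bound the number of potential copies of $P$ in $A$. A copy of $P$ is determined by choosing a one (the bottom-left $1$), another one in the same row strictly to its right (the bottom-right $1$), and a one strictly above the first in its column (the top-left $1$); the top-right position must be a zero, but for an \emph{upper bound} on copies we may ignore that constraint. Choosing the bottom-left one fixes a row and a column; the bottom-right one must lie in that row and the top-left one in that column. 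So the number of copies of $P$ is at most $\sum_{\text{ones } e} (\text{\# ones in row of }e)(\text{\# ones in column of }e)$. In the worst case this is $O(m^2)$, but that is too weak: I need it to be $O(m^{3/2})$-ish after thinning, so a more careful accounting is required.

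The key observation is that the exponent in Lemma~\ref{prob} came from a copy of $P$ having $r^2$ ones while being ``indexed'' by $r$ ones; here $P$ has $3$ ones and is indexed by $2$ ones (a row choice and a compatible column choice share the corner). So the right random threshold is $p = c\, m^{-1/2}$ for a suitable constant $c$. Under independent deletion keeping each one with probability $p$, the expected number of surviving ones is $pm = cm^{1/2}$, and the expected number of surviving copies of $P$ is at most $p^{3}$ times the number of copies of $P$ in $A$. So I need the number of copies of $P$ in $A$ to be $O(m^{3/2})$ — but as noted it can be as large as $\Theta(m^2)$ (e.g.\ a full $\sqrt m\times\sqrt m$ block). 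The fix is to first pass to a submatrix of $A$ in which every row and every column has at most $O(m^{1/2})$ ones: greedily delete any row or column with more than $2\sqrt m$ ones. Since each such deletion removes more ones than its ``share,'' a standard argument shows this process terminates with at least $m/2$ ones remaining, and in the resulting matrix $A''$ the number of copies of $P$ is at most $\sum_{e}(\text{\# ones in row})(\text{\# ones in column}) \le (2\sqrt{m})^2 \cdot (m/2 \text{ ones}) = O(m^2)$ — still not enough. So instead I bound it as $\le \sum_e (\text{\# ones in column of }e)\cdot 2\sqrt m = 2\sqrt m \sum_e(\text{\# ones in column of }e) = 2\sqrt m\cdot\sum_{\text{cols }c}(\text{\# ones in }c)^2$, and after capping columns at $2\sqrt m$ ones this is $\le 2\sqrt m\cdot 2\sqrt m\cdot(m/2)=2m^2$, which is the obstacle: the naive caps do not suffice.

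The genuine route, which I would follow, is to not try to control \emph{all} copies of $P$ but to use the structure: a row $i$ of a $P$-free matrix, together with the set of rows below it, is constrained — if row $i$ has a one in column $c$ and some row $j>i$ has ones in columns $c$ and $c'>c$, we get a copy of $P$. In fact the cleanest approach is a direct combinatorial one rather than probabilistic: I claim any $P$-free matrix with rows ordered top to bottom has the property that once we pass row $i$, for each column $c$ already ``used above,'' row $i$ can contain at most one one, because two ones in row $i$ with a one above and to the left would create $P$. Chaining this observation gives that a $P$-free submatrix on $n$ columns has $O(n + (\text{number of rows}))$ ones, i.e.\ $\mathrm{ex}(n,P)=O(n)$. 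Given that, one applies the easy general fact stated in the introduction — for every $P$ with $\mathrm{ex}(n,P)=O(n)$ one has $sm(m,P)=\Omega(m^{1/2})$ (split $A$ into an $\sqrt m\times\sqrt m$ arrangement; one of the pieces, or a row/column projection, yields a $P$-free piece of the right size) — but since the paper poses this as a lemma with a probabilistic flavor matching Lemma~\ref{prob}, I would instead keep the deletion argument and replace the crude copy-count with the bound ``number of copies of $P$ in $A$ is $O(m\cdot\Delta)$ where $\Delta$ is the max number of ones in a row,'' then \emph{first} delete every row with more than $m^{1/2}$ ones (losing at most half the ones, since there are fewer than $m^{1/2}$ such rows) to force $\Delta\le m^{1/2}$, giving at most $O(m^{3/2})$ copies, and \emph{then} run the random deletion with $p=\tfrac14 m^{-1/2}$ so that $pm/2 - p^3\cdot O(m^{3/2}) = \Omega(m^{1/2})$. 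The main obstacle, and the step to get right, is exactly this preprocessing: justifying that capping row-weights at $m^{1/2}$ both (a) keeps $\ge m/2$ ones and (b) brings the number of copies of $P$ down to $O(m^{3/2})$, since a copy of $P$ is determined by its bottom row (with two chosen ones, at most $\Delta\cdot m$ choices for that ordered pair summed over ones... more precisely $\le \sum_{\text{rows }i} w_i^2 \le \Delta\sum_i w_i=\Delta m$) together with a choice of the top-left one in the appropriate column, which costs another factor $\le m$ — too much again unless one also caps columns, at which point one must check the two capping steps do not interfere. I expect reconciling these two caps (or, more elegantly, observing that a copy of $P$ is determined by the top-left one and the bottom row's \emph{pair}, where the pair's left coordinate equals the top-left one's column, so the count is $\le \sum_{\text{cols }c} (\text{ones in }c)\cdot(\text{ones in rows meeting }c) $) to be the delicate bookkeeping, and I would present it by capping both row- and column-weights at $m^{1/2}$ in one greedy sweep, losing at most half the ones, then counting copies of $P$ as $\le (\#\text{ ones})\cdot(\text{max row weight})\cdot 1$ after noticing the top-left corner's column is pinned, giving $O(m^{3/2})$, and finishing with the random deletion at $p=\Theta(m^{-1/2})$.
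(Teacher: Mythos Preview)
Your central claim---that the crude bound of at most $m^{2}$ copies of $P$ in $A$ is ``too weak''---is simply an arithmetic slip, and it sends you on an unnecessary detour. A copy of $P$ is determined by its two diagonal ones, so there are fewer than $\binom{m}{2}<m^{2}$ copies. Each copy survives independent thinning with probability $p^{3}$, so the expected number of surviving copies is at most $p^{3}m^{2}$. Taking $p=\tfrac12 m^{-1/2}$ gives expected surviving ones $pm=\tfrac12 m^{1/2}$ and expected surviving copies at most $\tfrac18 m^{1/2}$; deleting one entry from each surviving copy leaves a $P$-free submatrix with at least $\tfrac38 m^{1/2}$ ones in expectation. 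This is exactly the paper's proof---no preprocessing, no capping, no $O(m^{3/2})$ target. The point you missed is that you only need $p^{3}\cdot(\text{copies})$ to be a \emph{constant fraction} of $pm$, not $o(pm)$; with copies $\le m^{2}$ and $p=cm^{-1/2}$ the difference is $(c-c^{3})m^{1/2}$, positive for any $0<c<1$.

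Beyond being unnecessary, your detour has its own loose ends. The assertion that greedily deleting heavy rows ``loses at most half the ones, since there are fewer than $m^{1/2}$ such rows'' is false as stated (a single row could contain all $m$ ones); the correct salvage is that a row with $\ge m^{1/2}$ ones is itself a $P$-free submatrix and you are already done. And your final count ``$(\#\text{ ones})\cdot(\text{max row weight})\cdot 1$'' is not justified: once you pick the top-left one, the column is pinned, but you must still choose the bottom-left one in that column (up to the column weight) \emph{and} the bottom-right one in its row (up to the row weight), giving $m\cdot R\cdot C$, which after capping both at $m^{1/2}$ is $m^{2}$, not $m^{3/2}$. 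None of this matters, though, since the plain $m^{2}$ bound already works.
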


\begin{proof}
Let $A$ be any $0-1$ matrix with $m$ ones. The maximum possible number of copies of $P$ in $A$ is less than $m^{2}$. To see this, note that any copy of $P$ is defined by the $2$ ones on its main diagonal. Among the $m$ ones in $A$, there are at most ${m \choose 2}$ ways to choose $2$ ones, which is less than $m^{2}$.

Now create a matrix $A'$ from $A$ by not changing the zeroes in $A$, and changing each one in $A$ to zero with probability $1-p$, such that $p = \frac{1}{2}m^{-1/2}$. The expected number of ones in $A'$ is $m p$ and the expected number of copies of $P$ in $A'$ is at most $p^{3} m^{2}$. Then there is a $P$-free $0-1$ matrix contained in $A$ which has at least $m p - p^{3} m^{2} = \Omega(m^{1/2})$ edges on average. Therefore there exists a choice of $A'$ which produces a $P$-free subgraph of size $\Omega(m^{1/2})$.
\end{proof}

The next lemma gives a trivial upper bound on $sm(m, P)$ for $0-1$ matrices $P$ in terms of $ex(n, P)$.

\begin{lem}\label{smex}
$sm(m, P) \leq ex(m^{1/2}, P)$
\end{lem}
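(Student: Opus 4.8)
The plan is to exhibit a single $0$--$1$ matrix $A$ with $m$ ones for which every $P$-avoiding submatrix already has few ones, and the natural candidate is the all-ones square matrix. Concretely, I would set $n = \lceil m^{1/2}\rceil$, let $A$ be the $n \times n$ matrix with every entry equal to $1$, and then delete $n^{2}-m$ of its ones so that $A$ has exactly $m$ ones; deleting ones only decreases $LSM(A,P)$, so it suffices to bound $LSM$ for the all-ones $n\times n$ matrix. (If one prefers to avoid the ceiling, one may assume $m$ is a perfect square and absorb the rounding into the statement, which is how I would read $ex(m^{1/2},P)$.)

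The core observation is that any $0$--$1$ matrix $B$ contained in $A$ is obtained from a submatrix of $A$ by changing some ones to zeroes, so $B$ has at most $n$ rows and at most $n$ columns. If in addition $B$ avoids $P$, pad $B$ with all-zero rows and all-zero columns until it becomes an $n\times n$ matrix $B'$ with the same number of ones. Then $B'$ still avoids $P$: in any prospective copy of $P$, a padded all-zero row or column could only be matched to an all-zero row or column of $P$, which we may assume does not occur by the standard convention that $P$ has no empty row or column, so a copy of $P$ in $B'$ would already be a copy of $P$ in $B$. Hence the number of ones in $B$ is at most $ex(n,P)$, and therefore $LSM(A,P)\le ex(n,P)$.

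Since $A$ has exactly $m$ ones, this yields $sm(m,P)\le LSM(A,P)\le ex(\lceil m^{1/2}\rceil,P)$, which is the claimed inequality up to the harmless ceiling. The argument is essentially immediate; the only step needing a word of care is the padding step — checking that adjoining zero rows and columns cannot manufacture a new copy of $P$ — and I would state the no-empty-line convention on $P$ explicitly so that this is transparent.
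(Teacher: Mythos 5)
Your proposal is correct and matches the paper's proof: both take $A$ to be the all-ones $m^{1/2}\times m^{1/2}$ matrix and observe that any $P$-avoiding matrix contained in it has at most $ex(m^{1/2},P)$ ones. The paper states this in one line without your careful remarks on rounding and zero-padding, but the underlying argument is identical.
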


\begin{proof}
Let $A$ be an $m^{1/2} \times m^{1/2}$ matrix with every entry equal to $1$. Then the maximum number of ones in any $P$-free $0-1$ matrix contained in $A$ is $ex(m^{1/2}, P)$.
\end{proof}

This lemma can be used to derive a number of corollaries about $sm(m, P)$.

\begin{cor}\label{linex}
If $P$ is a $0-1$ matrix with $ex(n, P) = O(n)$, then $sm(m, P) = O(m^{1/2})$.
\end{cor}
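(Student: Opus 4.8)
The plan is to obtain this immediately from Lemma \ref{smex}. That lemma already supplies the inequality $sm(m,P) \le ex(m^{1/2}, P)$ for every $0-1$ matrix $P$, so the corollary reduces to substituting the hypothesis $ex(n,P) = O(n)$ into the right-hand side. First I would unpack the asymptotic assumption: $ex(n,P) = O(n)$ means there is a constant $C$ and a threshold $n_0$ with $ex(n,P) \le Cn$ for all $n \ge n_0$. Evaluating at $n = m^{1/2}$ then yields $ex(m^{1/2}, P) \le C m^{1/2}$ for all sufficiently large $m$, and combining with Lemma \ref{smex} gives $sm(m,P) \le C m^{1/2} = O(m^{1/2})$, as desired. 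For small $m$ the bound holds trivially after enlarging the constant, since $sm(m,P) \le m$.

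The only point that needs a word of care is that $ex(\cdot, P)$ is defined on integers while $m^{1/2}$ need not be one; Lemma \ref{smex} is implicitly using $ex(\lceil m^{1/2}\rceil, P)$ (the all-ones matrix it constructs has integer side length). I would dispose of this by noting that $ex(\cdot, P)$ is nondecreasing — padding an $n \times n$ all-ones matrix with an extra zero row and zero column cannot decrease the number of ones in its largest $P$-free submatrix — so $ex(\lceil m^{1/2}\rceil, P) \le ex(m^{1/2}+1, P) \le C(m^{1/2}+1) = O(m^{1/2})$, which absorbs the rounding cleanly.

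I do not expect a genuine obstacle here: the corollary is essentially a restatement of Lemma \ref{smex} specialized to the linear-$ex$ regime, and the argument is a one-line substitution together with the monotonicity remark above. The substantive content — that the trivial upper bound $sm(m,P) \le ex(m^{1/2},P)$ holds at all — is already contained in Lemma \ref{smex}.
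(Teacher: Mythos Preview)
Your proposal is correct and follows exactly the paper's intended route: the corollary is immediate from Lemma~\ref{smex} by substituting the linear bound on $ex(n,P)$, and the paper offers no further argument. Your extra care with the rounding of $m^{1/2}$ and small-$m$ cases only makes the deduction more rigorous than the paper's one-line statement.
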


\begin{cor}\label{light}
If $P$ is a $0-1$ matrix with a single one in every column, then $sm(m, P)$ is quasilinear in $m^{1/2}$.
\end{cor}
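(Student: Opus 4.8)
The plan is to reduce the corollary, via Lemma \ref{smex}, to a bound on the Tur\'an-type extremal function. Since $sm(m, P) \le ex(m^{1/2}, P)$ by Lemma \ref{smex}, it suffices to show that for every $0$-$1$ matrix $P$ with exactly one $1$ in each column there is a constant $t = t(P)$ with $ex(n, P) = O\!\left(n\,2^{\alpha(n)^{t}}\right)$; substituting $n = m^{1/2}$ then gives exactly the paper's notion of quasilinearity for $sm(m, P)$. This also explains why the statement is only claimed to be quasilinear rather than $O(m^{1/2})$: unlike in Corollary \ref{linex}, the relevant matrices need not have linear extremal function.

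To bound $ex(n, P)$ for such a $P$, I would translate to sequences. Let $P$ have $r$ rows and $s$ columns, and let $\rho(j)\in[r]$ be the row of the unique $1$ in column $j$, giving a word $w = \rho(1)\cdots\rho(s)$ of length $s$. Two adjacent columns of $P$ whose $1$'s lie in the same row are identical columns, and deleting one of a pair of adjacent identical columns changes $ex(\cdot, P)$ by at most a constant factor, as this is the inverse of Keszegh's column-duplication operation for $ex$ \cite{kes}; hence we may assume $w$ has no two consecutive equal letters. Then the classical correspondence between light $0$-$1$ matrices and generalized Davenport--Schinzel sequences bounds $ex(n, P)$, up to a constant factor depending on $P$, by the maximum length of a sequence over $n$ symbols, with no two consecutive equal symbols, that avoids $w$ as a subsequence. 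Klazar's upper bound for generalized Davenport--Schinzel sequences, sharpened by Nivasch \cite{niv} and Pettie \cite{pettie}, says this maximum length is $O\!\left(n\,2^{\alpha(n)^{O(1)}}\right)$ for every fixed $w$, which yields the desired bound on $ex(n, P)$ and hence the corollary. (In the special case where the letters of $w$ are distinct, $P$ embeds in a permutation matrix and one even gets $ex(n, P) = O(n)$ from the Marcus--Tardos theorem, but quasilinearity is all that is needed.)

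The main obstacle is the matrix-to-sequence translation: one must ensure it loses only a constant factor and not a logarithmic factor, since $\alpha(n)$ grows so slowly that even an $O(n \log n)$ bound would fail the paper's strong notion of quasilinearity. This is exactly the point where the hypothesis ``one $1$ in each column'' is used --- it makes the correspondence with Davenport--Schinzel-type sequences essentially lossless, whereas for general $P$ the relationship between $ex(n, P)$ and the sequence extremal functions can genuinely involve a logarithmic factor.
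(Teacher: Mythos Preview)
Your proposal is correct and follows essentially the same approach as the paper: use Lemma~\ref{smex} to reduce to bounding $ex(n,P)$, then invoke the known quasilinear bound $ex(n,P)=O\!\left(n\,2^{\alpha(n)^{t}}\right)$ for matrices with a single one per column. The paper simply cites this bound directly from Klazar~\cite{klaz}, whereas you unpack it via the matrix-to-sequence translation and Davenport--Schinzel machinery; this is extra detail rather than a different route.
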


\begin{proof}
It is known that for every such $P$, there exists a constant $t$ such that $ex(n, P) \leq n 2^{\alpha(n)^{t}}$ \cite{klaz}. Since $ex(n, P)$ is quasilinear in $n$, then $sm(m, P)$ is quasilinear in $m^{1/2}$ by Lemma \ref{smex}.
\end{proof}

\begin{cor}
If $P = \begin{bmatrix} 1 & 0 \\ 1 & 1 \end{bmatrix}$, then $sm(m, P) = O(m^{1/2})$.
\end{cor}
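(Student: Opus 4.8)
The plan is to route through the Tur\'an-type function and reduce everything to a linear bound on $ex(n,P)$. By Lemma~\ref{smex} we have $sm(m, P) \le ex(m^{1/2}, P)$, so it suffices to prove $ex(n, P) = O(n)$ for $P = \begin{bmatrix} 1 & 0 \\ 1 & 1 \end{bmatrix}$; Corollary~\ref{linex} then packages the conclusion, and in fact the explicit constant obtained below lets one read off the bound directly from Lemma~\ref{smex}. Thus the only real content is a structural description of $n \times n$ $0-1$ matrices avoiding $P$.

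First I would unwind the containment definition. A copy of $P$ in a matrix $A$ is a choice of rows $r_1 < r_2$ and columns $c_1 < c_2$ with ones of $A$ at $(r_1, c_1)$, $(r_2, c_1)$ and $(r_2, c_2)$, the top-right cell being free because $P$ has a $0$ there. Hence avoiding $P$ is equivalent to the following constraint: whenever a one of $A$ at position $(r,c)$ is \emph{not} the topmost one in column $c$, then — putting the topmost one of column $c$ in the role of $(r_1,c_1)$ — row $r$ can contain no one strictly to the right of column $c$, i.e.\ $(r,c)$ must be the rightmost one of its row. Then I would finish with a direct count: split the ones of $A$ into those that are topmost in their column (at most one per column, hence at most $n$) and the rest (each, by the constraint, the rightmost one of its row, hence at most $n$), where degenerate columns containing zero or one ones cause no trouble. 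This gives $ex(n,P) \le 2n$, so $sm(m,P) \le 2m^{1/2} = O(m^{1/2})$; combined with Lemma~\ref{lshape} this actually pins down $sm(m,P) = \Theta(m^{1/2})$.

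I do not expect a genuine obstacle here: the argument is short, and the only point demanding care is assigning the three ones of a hypothetical copy of $P$ to ones of $A$ correctly (which one is the isolated one and which two form the pair sharing a column) so that the case analysis in the structural step is airtight, and then verifying that the two classes of ones really cover every one of $A$ with each class of size at most $n$. As a shortcut one could instead cite the known linear bound on $ex(n,P)$ for this small pattern and invoke Corollary~\ref{linex}, but the self-contained argument above costs essentially nothing and keeps the section uniform.
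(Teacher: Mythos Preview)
Your proposal is correct and follows the same route as the paper: reduce to $ex(n,P)=O(n)$ and invoke Corollary~\ref{linex} (equivalently Lemma~\ref{smex}). The only difference is that the paper simply cites the linear bound as known, whereas you supply the short self-contained argument $ex(n,P)\le 2n$; that addition is sound and your structural analysis of $P$-avoidance is accurate.
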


\begin{proof}
Since $ex(n, P) = O(n)$, then the result follows from Corollary \ref{linex}.
\end{proof}

The next two lemmas show that for every $0-1$ matrix $P$ with $ex(n, P) = O(n)$, either $sm(m, P) = \Theta(1)$ or $sm(m, P) = \Theta(m^{1/2})$.

\begin{lem}
If $P$ is either the $k \times 1$ matrix with all ones, the $1 \times k$ matrix with all ones, or a $k \times k$ matrix with $k$ ones on a diagonal, then $sm(m, P) = k-1$.
\end{lem}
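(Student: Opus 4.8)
The plan is to prove the two bounds $sm(m,P) \geq k-1$ and $sm(m,P) \leq k-1$ separately, each being elementary. For the lower bound, I would exhibit, for every $m$, a $0$-$1$ matrix $A$ with $m$ ones in which \emph{every} $P$-avoiding submatrix has at most $k-1$ ones is the \emph{wrong} direction — so instead, for the lower bound I want to show that \emph{every} $0$-$1$ matrix $A$ with $m$ ones contains a $P$-avoiding submatrix with at least $k-1$ ones. In each of the three cases this is immediate: any $k-1$ ones chosen from $A$ cannot contain $P$, since $P$ requires $k$ ones (it has exactly $k$ ones whether it is the $k\times 1$, the $1\times k$, or the $k\times k$ diagonal matrix), and a $0$-$1$ matrix with only $k-1$ ones cannot contain a pattern with $k$ ones. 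Hence $LSM(A,P) \geq \min(m,k-1)$, and taking $m \geq k-1$ gives $sm(m,P) \geq k-1$.

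For the upper bound I need to construct, for each of the three shapes of $P$, a single matrix $A$ with $m$ ones such that every submatrix of $A$ with $k$ ones already contains $P$; then $LSM(A,P) \leq k-1$ and hence $sm(m,P) \leq k-1$. The natural choices are: for the $k \times 1$ all-ones matrix, take $A$ to be a single column of $m$ ones — any $k$ of them form the $k\times 1$ pattern; for the $1 \times k$ all-ones matrix, take $A$ to be a single row of $m$ ones; and for the $k\times k$ diagonal matrix, take $A$ to be an $m \times m$ diagonal matrix of ones, in which any $k$ of the ones lie in distinct rows and distinct columns and so, after reordering, realize the diagonal pattern. In each case any set of $k$ ones of $A$ contains a copy of $P$, so no $P$-avoiding submatrix of $A$ has more than $k-1$ ones.

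Combining the two directions gives $sm(m,P) = k-1$ for all $m \geq k-1$, which is the claimed value (and matches the convention that $sm$ is considered in the regime where $m$ is large). I do not expect any genuine obstacle here: the only point requiring a moment's care is the diagonal case, where one must check that ``contains'' for $0$-$1$ matrices permits the row/column reordering needed to see an arbitrary $k$-subset of the diagonal of $A$ as the $k \times k$ diagonal pattern — but since a submatrix is obtained by selecting a subset of rows and a subset of columns (in their given order), one should instead argue directly: selecting the $k$ rows and $k$ columns that meet the chosen ones yields a $k\times k$ submatrix whose only ones lie on its main diagonal, which is exactly $P$ (or contains $P$ after turning ones into zeroes if $P$ is the diagonal with some diagonal entries zero, though here $P$ has all $k$ diagonal entries equal to one). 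This confirms the bound.
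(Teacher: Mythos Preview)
Your argument is correct and follows exactly the paper's approach: the lower bound holds because $P$ has $k$ ones, and the upper bound comes from the same three extremal constructions (a single column, a single row, and a diagonal of length $m$). Your additional verification of the diagonal case---that selecting the $k$ relevant rows and columns yields precisely the $k\times k$ diagonal pattern---is a useful sanity check that the paper leaves implicit.
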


\begin{proof}
The upper bound follows by considering either the $m \times 1$ matrix with all ones, the $1 \times m$ matrix with all ones, or an $m \times m$ matrix with $m$ ones on a diagonal. The lower bound follows since $P$ has $k$ ones.
\end{proof}

The next lemma gives a more general bound than the one in Lemma \ref{lshape}.

\begin{lem}\label{3cases}
Suppose that $P$ is a $0-1$ matrix that contains at least one of the following patterns:
$\begin{bmatrix} 0 & 1 & 0 \\ 1 & 0 & 1 \end{bmatrix}, \begin{bmatrix} 0 & 0 & 1 \\ 1 & 1 & 0 \end{bmatrix}, \begin{bmatrix} 0 & 1 \\ 1 & 1 \end{bmatrix}, \begin{bmatrix} 0 & 0 & 1 \\ 1 & 0 & 0 \\ 0 & 1 & 0 \end{bmatrix}$

Then $sm(m, P) = \Omega(m^{1/2})$.
\end{lem}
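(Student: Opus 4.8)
The plan is to reduce immediately to the four patterns in the list. Since a $0-1$ matrix that avoids a pattern $Q$ automatically avoids every pattern containing $Q$, one gets $sm(m,P)\ge sm(m,Q)$ whenever $P$ contains $Q$, so it is enough to prove $sm(m,Q)=\Omega(m^{1/2})$ for each of the four choices of $Q$ separately. For $Q=\begin{bmatrix}0&1\\1&1\end{bmatrix}$ this is essentially free: reversing the order of the columns is a bijection on $0-1$ matrices that preserves the number of ones and the containment relation, and it carries $Q$-avoidance to $\begin{bmatrix}1&0\\1&1\end{bmatrix}$-avoidance, so $sm(m,Q)=sm\bigl(m,\begin{bmatrix}1&0\\1&1\end{bmatrix}\bigr)=\Omega(m^{1/2})$ by Lemma~\ref{lshape}. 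The other three patterns each need a different, ad hoc argument, because the probabilistic method of Lemmas~\ref{prob} and~\ref{lshape} no longer applies to them (see the last paragraph).

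For $Q=\begin{bmatrix}0&1&0\\1&0&1\end{bmatrix}$ and $Q=\begin{bmatrix}0&0&1\\1&1&0\end{bmatrix}$ I would exploit that each has a row containing two ones. Consequently any $0-1$ matrix with at most one one per row avoids $Q$, and so does any $0-1$ matrix all of whose ones lie in a single row. So, given a host matrix $A$ with $m$ ones: if some row of $A$ has at least $m^{1/2}$ ones, zeroing every one outside that row gives a $Q$-free matrix contained in $A$ with at least $m^{1/2}$ ones; otherwise every row has fewer than $m^{1/2}$ ones, so $A$ has more than $m^{1/2}$ nonempty rows, and keeping exactly one one in each of them gives a $Q$-free matrix contained in $A$ with more than $m^{1/2}$ ones. (Equivalently, the largest row weight times the number of nonempty rows is at least $m$.) Either way $sm(m,Q)=\Omega(m^{1/2})$.

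For $Q=\begin{bmatrix}0&0&1\\1&0&0\\0&1&0\end{bmatrix}$, which is the $312$ permutation matrix, I would argue via the product partial order $(i,j)\preceq(i',j')\iff i\le i'$ and $j\le j'$ on the ones of the host $A$. If ones at positions $(i_1,j_3),(i_2,j_1),(i_3,j_2)$ with $i_1<i_2<i_3$ and $j_1<j_2<j_3$ witness a copy of $Q$, then $(i_2,j_1)\prec(i_3,j_2)$ while $(i_1,j_3)$ is incomparable to $(i_2,j_1)$; hence a set of ones that forms an antichain cannot induce a matrix containing $Q$ (it would contain a comparable pair), and neither can a set that forms a chain (it would contain an incomparable pair). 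By Dilworth's theorem the $m$ ones decompose into $a$ chains, where $a$ is the maximum antichain size, so some chain has at least $m/a$ ones; therefore $A$ has either an antichain or a chain of at least $m^{1/2}$ ones, and zeroing all remaining ones yields a $Q$-free matrix contained in $A$ of that size. So $sm(m,Q)=\Omega(m^{1/2})$.

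The main obstacle is conceptual rather than computational: recognizing that the ``count the copies, then randomly sparsify'' strategy fails here and picking the right substitute for each pattern. It fails because a copy of, say, $\begin{bmatrix}0&1&0\\1&0&1\end{bmatrix}$ is not determined by any two of its ones, so a matrix with $m$ ones can contain $\Theta(m^{3})$ copies, and after sparsifying to density $\Theta(m^{-1/2})$ the expected number of surviving copies is $\Theta(m^{3/2})$, far exceeding the $\Theta(m^{1/2})$ surviving ones. The substitutes are the ``doubly occupied row'' observation for the two height-$2$ patterns and the chain/antichain dichotomy---equivalently, that $312$ is avoided both by monotone increasing and by monotone decreasing point sets---for the permutation pattern; a small amount of care is then needed to check that these constructions really produce matrices \emph{contained in} $A$ in the sense of the paper's definition.
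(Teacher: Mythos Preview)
Your proof is correct, but the paper takes a single unified route rather than four separate ones. The paper linearizes the ones of $A$ into a sequence $S$ of column indices (reading rows top to bottom, and within each row left to right), applies the Erd\H{o}s--Szekeres theorem to extract a monotone subsequence of length $\ge m^{1/2}$, and observes that the ones corresponding to either a non-decreasing or a non-increasing subsequence form a submatrix avoiding all four listed patterns simultaneously. This is really the same chain/antichain dichotomy you used for the $312$ pattern, since Erd\H{o}s--Szekeres is Dilworth on the product order; the linearization just handles ties in a row gracefully.

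In other words, your own fourth argument already proves the whole lemma: a chain in $\preceq$ contains no incomparable pair, and each of the four patterns has one (e.g.\ the top-row one versus a bottom-row one), while an antichain contains no comparable pair, and each of the four patterns has one of those too (two ones in the same row for the first three, the pair $(r_2,c_1)\prec(r_3,c_2)$ for $312$). So your detours through Lemma~\ref{lshape} and the row-weight pigeonhole are unnecessary, though not wrong. The payoff of the paper's approach is economy and the immediate observation that the monotone submatrix avoids all four patterns at once; the payoff of yours is that each step is locally transparent and requires no verification that a monotone staircase dodges every pattern on the list.
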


\begin{proof}
Let $A$ be a $0-1$ matrix with $m$ ones. Construct a sequence $S$ by the following process: Start $S$ as the empty sequence. For each row in $A$ from top to bottom, scan the current row from left to right, and append the integer $i$ to $S$ if column $i$ has a $1$ in the current row. By the Erdos-Szekeres theorem, $S$ contains either a non-increasing or non-decreasing subsequence of length $m^{1/2}$. This subsequence corresponds to a $0-1$ matrix contained in $A$ that avoids $P$ and has at least $m^{1/2}$ ones.
\end{proof}

\begin{cor}
Suppose that $P$ is a matrix such that $ex(n, P) = O(n)$. Then $sm(m,P) = \Theta(m^{1/2})$ if for every $k$, $P$ is not contained in any of the following matrices: the $k \times 1$ matrix with all ones, the $1 \times k$ matrix with all ones, or a $k \times k$ matrix with $k$ ones on a diagonal. Otherwise, $sm(m,P) = \Theta(1)$.
\end{cor}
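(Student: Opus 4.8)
The plan is to establish the two halves of the dichotomy separately, using only that $sm$ is monotone under containment ($sm(m,Q)\le sm(m,R)$ whenever $Q$ is contained in $R$) and invariant under the eight symmetries of the square; both facts are immediate from the definition, since reflecting or transposing a host matrix and a pattern simultaneously preserves containment. For the $\Theta(1)$ half, if $P$ is contained in a $k\times1$ all-ones matrix, a $1\times k$ all-ones matrix, or a $k\times k$ matrix with $k$ ones on a diagonal, then monotonicity and the preceding lemma give $sm(m,P)\le k-1$, while any $(\#\text{ones of }P)-1$ ones of a host are automatically $P$-free, so $sm(m,P)\ge(\#\text{ones of }P)-1$ once $m$ is large; hence $sm(m,P)=\Theta(1)$.

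For the $\Theta(m^{1/2})$ half, suppose $P$ is contained in none of those three matrices and $ex(n,P)=O(n)$. The upper bound $sm(m,P)=O(m^{1/2})$ is Corollary \ref{linex}. For the lower bound, fix a host $A$ with $m$ ones and view its ones as a subposet of the integer grid under the coordinatewise order. By Mirsky's theorem (equivalently, the Erd\H{o}s--Szekeres argument of Lemma \ref{3cases}), $m$ is at most the longest chain length times the largest antichain size, so one of these is at least $m^{1/2}$. Keeping only the ones of a chain of that size gives a submatrix of $A$ whose ones again form a chain, hence $P$-free unless the ones of $P$ themselves form a chain; keeping only the ones of an antichain of that size gives a decreasing staircase, a submatrix of an anti-diagonal matrix, hence $P$-free since $P$ is not contained in any such matrix. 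Doing the same with the reversed coordinate order $(a,b)\preceq(c,d)\iff a\le c$ and $b\ge d$ yields, again for free, a $\preceq$-chain of size $\ge m^{1/2}$ (a $P$-free submatrix unless the ones of $P$ form a $\preceq$-chain) or a $\preceq$-antichain of that size (an increasing staircase, hence $P$-free since $P$ is not contained in a diagonal matrix). So $LSM(A,P)\ge m^{1/2}$ unless the ones of $P$ simultaneously form a chain and a $\preceq$-chain in the grid.

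The last point is where the argument needs care, and is the step I expect to be the main obstacle (though a short one): I would show that a set of grid points that is both a chain and a $\preceq$-chain lies in a single row or a single column. Indeed, two such points in distinct rows $r<r'$ must share a column, since chain-comparability forces the column of the first to be $\le$ that of the second while $\preceq$-comparability forces $\ge$; chasing this through every pair forces $P$ into one row or one column, so $P$ would be contained in a $1\times k$ or a $k\times1$ all-ones matrix, contrary to assumption. Hence $LSM(A,P)\ge m^{1/2}$ for every host, so $sm(m,P)=\Omega(m^{1/2})$, and with the upper bound $sm(m,P)=\Theta(m^{1/2})$. Since the two cases cover all $P$, the classification follows. (One could instead get the lower bound from Lemmas \ref{lshape} and \ref{3cases} by showing that every non-trivial $P$ with $ex(n,P)=O(n)$ contains a symmetry of $\begin{bmatrix}1&0\\1&1\end{bmatrix}$ or of one of the four patterns of Lemma \ref{3cases}, but that case analysis is more delicate, since — unlike for the Tur\'{a}n function — one cannot assume $P$ avoids $\begin{bmatrix}0&1\\1&0\end{bmatrix}$, which has linear extremal function.)
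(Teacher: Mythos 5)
Your proof is correct, and it takes a genuinely different---and in fact more complete---route than the paper, which states this corollary without an explicit proof. The intended derivation is clearly to combine the preceding lemma (for the $\Theta(1)$ half), Corollary \ref{linex} (for the upper bound), and Lemma \ref{3cases} (for the lower bound); but that derivation has a gap that your argument repairs. Lemma \ref{3cases} only covers patterns containing one of four hard-coded configurations, each of which either has a row with two ones or has three ones in distinct rows and distinct columns; a pattern such as $\begin{bmatrix}0&1\\0&1\\1&0\end{bmatrix}$, which is not contained in any row, column, or diagonal matrix, therefore contains none of the four, so one must at least pass to a reflection or transpose, and verifying that every admissible $P$ contains a symmetric image of one of the listed patterns is exactly the ``delicate case analysis'' you flag in your closing parenthetical. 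Your two-order Mirsky argument sidesteps this entirely: the four possible outcomes ($\leq$-chain, $\leq$-antichain, $\preceq$-chain, $\preceq$-antichain) are classified by intrinsic properties of $P$ (whether its ones form a chain in each order, and whether it embeds in a diagonal or anti-diagonal matrix), and your final observation---that a set of grid points forming a chain in both orders must lie in a single row or single column, because any two points in distinct rows are forced to satisfy both $b\leq d$ and $b\geq d$---correctly closes the loop; the $\Theta(1)$ half via monotonicity of $sm$ under containment is also fine. The only point worth making explicit is that you are reading ``a $k\times k$ matrix with $k$ ones on a diagonal'' as covering both the main diagonal and the anti-diagonal; that reading is forced (otherwise the $\Theta(1)$ half of the statement would fail for the anti-diagonal pattern), and your argument relies on it when declaring the $\leq$-antichain $P$-free. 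Your approach also makes transparent why the Erd\H{o}s--Szekeres scan in the paper's Lemma \ref{3cases} is really a one-sided special case of the chain/antichain dichotomy.
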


\section{Sequence Results}

\begin{lem}
If $v$ is a sequence equal to either $k$ copies of the same letter or $k$ distinct letters, then $ss(m, v) = k-1$.
\end{lem}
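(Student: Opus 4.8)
The plan is to treat the two cases together, since both mirror the proof of the earlier lemma about the $k\times 1$, $1\times k$, and diagonal all-ones matrices. The crucial observation is that for each choice of $v$, whether a sequence $u$ contains $v$ reduces to a single global feature of $u$. When $v$ is $k$ copies of one letter, a subsequence isomorphic to $v$ is just $k$ equal terms, so $u$ contains $v$ if and only if some letter occurs at least $k$ times in $u$. When $v$ is $k$ distinct letters, a subsequence isomorphic to $v$ is just $k$ pairwise distinct terms, so $u$ contains $v$ if and only if $u$ uses at least $k$ distinct letters. In both cases one direction is immediate (select one occurrence of each relevant letter and order by position), and the other is the contrapositive.

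For the upper bound $ss(m,v)\le k-1$, I would exhibit one bad sequence $u$ of length $m$. If $v$ is $k$ copies of a letter, take $u$ to be $m$ copies of a single letter: then every subsequence of length at least $k$ is isomorphic to $v$, so the longest $v$-free subsequence of $u$ has length exactly $k-1$. If $v$ is $k$ distinct letters, take $u=1\,2\,\cdots\,m$: then every subsequence of length at least $k$ consists of at least $k$ distinct letters and hence contains $v$, so again the longest $v$-free subsequence has length exactly $k-1$. Either way $LSS(u,v)=k-1$, which gives $ss(m,v)\le k-1$.

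For the lower bound $ss(m,v)\ge k-1$, I would use the trivial fact that any subsequence of length $k-1$ is automatically $v$-free, since $v$ has length $k$ and any subsequence isomorphic to $v$ must itself have length $k>k-1$. Hence for every sequence $u$ of length $m\ge k-1$, any $k-1$ of its terms form a $v$-free subsequence, so $LSS(u,v)\ge k-1$ and therefore $ss(m,v)\ge k-1$. Combining the two bounds yields $ss(m,v)=k-1$.

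I do not expect a genuine obstacle here; the write-up should be only a few lines. The two points that merit a little care are the degenerate range $m<k-1$ (read with the convention that a length-$m$ sequence has no subsequence longer than $m$, so $ss(m,v)=m$ there) and stating the two ``contains'' characterizations precisely with respect to the isomorphism relation on sequences.
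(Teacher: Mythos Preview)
Your proof is correct and follows exactly the paper's approach: the upper bound uses the sequence of $m$ copies of one letter (respectively $m$ distinct letters), and the lower bound is the trivial observation that $v$ has $k$ letters so any length-$(k-1)$ subsequence is $v$-free. The paper's own proof is a two-line version of what you wrote.
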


\begin{proof}
The upper bound follows by considering the sequence equal to either $m$ copies of the same letter or $m$ distinct letters. The lower bound follows since $v$ has $k$ letters.
\end{proof}

\begin{lem}\label{aaabl}
If $v$ contains both $a a$ and $a b$, then $ss(m, v) \geq m^{1/2}$.
\end{lem}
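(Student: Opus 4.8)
The plan is to unwind the definition directly. Since $ss(m,v) = \min_{u} LSS(u,v)$ over all sequences $u$ of length $m$, proving $ss(m,v) \geq m^{1/2}$ amounts to showing that \emph{every} sequence $u$ of length $m$ admits a $v$-free subsequence of length at least $m^{1/2}$, i.e. that $LSS(u,v) \geq m^{1/2}$ for all such $u$. So I would fix an arbitrary length-$m$ sequence $u$ and exhibit one long $v$-free subsequence inside it.

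The two hypotheses on $v$ each rule out a simple family of subsequences from ever containing $v$, and these are the key observations I would record first. Because $v$ contains $aa$ (so some letter of $v$ is repeated), any subsequence of $u$ whose letters are pairwise distinct is $v$-free: every subsequence of a sequence of distinct letters is again a sequence of distinct letters, hence cannot be isomorphic to $v$. Because $v$ contains $ab$ (so $v$ uses at least two distinct letters), any \emph{constant} subsequence of $u$, one consisting of a single repeated letter, is $v$-free: all of its subsequences are constant and so cannot be isomorphic to $v$ either.

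Next I would split on the number $k$ of distinct letters appearing in $u$. If $k \geq m^{1/2}$, then selecting one occurrence of each distinct letter yields a subsequence of $k \geq m^{1/2}$ pairwise distinct letters, which is $v$-free by the first observation. If $k < m^{1/2}$, then by the pigeonhole principle some letter occurs at least $m/k > m/m^{1/2} = m^{1/2}$ times, and the subsequence consisting of all its occurrences is a constant sequence of length exceeding $m^{1/2}$, which is $v$-free by the second observation. In either case $LSS(u,v) \geq m^{1/2}$, and since $u$ was arbitrary the bound $ss(m,v) \geq m^{1/2}$ follows.

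The argument is short, and the only point requiring genuine care is the meaning of \emph{$v$-free}: the requirement is not merely that the chosen subsequence fail to be isomorphic to $v$, but that none of \emph{its} subsequences be isomorphic to $v$. This is precisely why I would phrase the two observations in terms of a structural property (being all-distinct, or being all-equal) that is inherited by every subsequence, rather than comparing lengths or letter multiplicities to those of $v$ directly. Beyond getting this inheritance right, I do not expect any real obstacle.
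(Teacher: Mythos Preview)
Your argument is correct and is exactly the paper's approach: a pigeonhole dichotomy on whether some letter appears at least $m^{1/2}$ times, yielding either a long constant subsequence or many distinct letters. You have simply spelled out more carefully than the paper why each of these two kinds of subsequence is $v$-free.
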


\begin{proof}
Let $u$ be any sequence of length $m$. If there is no letter that occurs at least $m^{1/2}$ times in $u$, then there are at least $m^{1/2}$ distinct letters in $u$.
\end{proof}

\begin{lem}
$ss(m, a b a b) = \Theta(m^{1/2})$.
\end{lem}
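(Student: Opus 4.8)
The plan is to prove the two bounds $ss(m,abab) = \Omega(m^{1/2})$ and $ss(m,abab) = O(m^{1/2})$ separately.

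The lower bound is immediate from Lemma \ref{aaabl}: the sequence $abab$ contains $aa$ (its first and third letters) and $ab$ (its first and second letters) as subsequences, so Lemma \ref{aaabl} gives $ss(m,abab) \ge m^{1/2}$. Equivalently, in any length-$m$ sequence either some letter occurs at least $m^{1/2}$ times, giving an $abab$-free subsequence consisting of its occurrences, or there are at least $m^{1/2}$ distinct letters, one occurrence of each of which is an $abab$-free subsequence.

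For the upper bound it suffices to exhibit, for each $m$, one sequence of length $m$ all of whose $abab$-free subsequences are short. Set $n = \lceil m^{1/2}\rceil$, let $U$ be the concatenation of $n$ copies of $1\,2\,\cdots\,n$ (so $U$ has length $n^2 \ge m$), and let $u$ be the length-$m$ prefix of $U$. Since any $abab$-free subsequence of $u$ is also one of $U$, we have $ss(m,abab) \le LSS(u,abab) \le LSS(U,abab)$, so it is enough to bound the length of an arbitrary $abab$-free subsequence $w$ of $U$. Write $w = w^{(1)}w^{(2)}\cdots w^{(n)}$, where $w^{(t)}$ collects the letters of $w$ taken from the $t$-th copy of $1\,2\,\cdots\,n$ in $U$. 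Each $w^{(t)}$ is a subsequence of $1\,2\,\cdots\,n$, hence a strictly increasing sequence of distinct integers, so it has no two equal adjacent letters. Therefore every pair of equal adjacent letters of $w$ straddles the boundary between two consecutive nonempty blocks (the largest letter of one block and the smallest letter of the next); there is at most one such pair per boundary and at most $n-1$ boundaries, so $w$ has at most $n-1$ immediate repetitions.

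Now collapse each maximal run of equal letters in $w$ to a single letter, obtaining a subsequence $\bar w$ of $w$ that is still $abab$-free (collapsing cannot create a forbidden pattern involving two distinct symbols), has no two equal adjacent letters, and uses at most $n$ distinct symbols. By the classical bound that a Davenport--Schinzel sequence of order $2$ on $n$ symbols has length at most $2n-1$ \cite{ds}, we get $|\bar w| \le 2n-1$. Since passing from $\bar w$ back to $w$ reinserts exactly one letter per immediate repetition counted above, $|w| = |\bar w| + (\#\text{ immediate repetitions of }w) \le (2n-1)+(n-1) = 3n-2 = O(m^{1/2})$. Combining the two bounds gives $ss(m,abab) = \Theta(m^{1/2})$. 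The only external ingredient is the order-$2$ Davenport--Schinzel bound (a short induction on the alphabet size could replace the citation); the step that genuinely uses the construction is the block decomposition, since making each copy of $1\,2\,\cdots\,n$ increasing is exactly what forces every immediate repetition of a subsequence to occur at one of the $O(n)$ block boundaries, and I expect checking this decomposition step is the only place requiring care.
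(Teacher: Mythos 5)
Your proof is correct, and it uses the same extremal construction $(a_1\cdots a_n)^n$ and the same lower bound (via Lemma \ref{aaabl}) as the paper; the difference lies entirely in how you bound the length of an $abab$-free subsequence $w$ of that construction. The paper argues directly and self-containedly: any such $w$ of length $3k$ would contain $k$ disjoint copies of $aa$, and since each letter occurs at most once per block, each successive $aa$ forces the use of a new block, exceeding the $k$ available blocks. You instead observe that immediate repetitions in $w$ can only occur at the at most $n-1$ boundaries between nonempty block pieces, collapse the runs, and invoke the Davenport--Schinzel order-$2$ bound $\lambda_2(n) = 2n-1$ from \cite{ds}; your accounting ($|w| \le (2n-1) + (n-1)$) is accurate, including the detail that collapsing a run of length $j$ removes exactly $j-1$ letters. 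Your route imports one external fact where the paper needs none, but it buys something in return: the identical block-boundary decomposition shows $ss(m, a_{s+2}) \le \lambda_s(\lceil m^{1/2}\rceil) + \lceil m^{1/2}\rceil$ for every $s$, which for longer alternations gives sharper upper bounds than the general quasilinear-in-$m^{1/2}$ bound the paper later obtains through the $0$--$1$ matrix reduction. You are also slightly more careful than the paper in handling $m$ that is not a perfect square, by passing to a prefix of $U$.
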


\begin{proof}
The lower bound follows from Lemma \ref{aaabl}. For the upper bound, let $m = k^{2}$ be a perfect square and consider the sequence $u$ of length $m$ such that $u = (a_{1} \ldots a_{k})^{k}$. Let each disjoint copy of $(a_{1} \ldots a_{k})$ in $u$ be called a block.

Let $v$ be a subsequence of $u$ of length $3k$ and suppose for contradiction that $v$ avoids $a b a b$. Every subsequence of length $k+1$ has a subsequence isomorphic to $a a$. Therefore there are $k$ disjoint copies of $a a$ in $v$. The first copy of $a a$ is in two blocks of $u$, and each successive copy requires at least one more block since $v$ avoids $a b a b$. This is a contradiction.
\end{proof}

\begin{cor}
$sp(m,2) = \Theta(m^{1/2})$
\end{cor}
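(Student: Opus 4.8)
The plan is to prove the two directions separately. The upper bound $sp(m,2)=O(m^{1/2})$ needs no new work: specializing the inequality $sp(m,k)\le ss(m,a_{k+2})$ recorded in the introduction to $k=2$ gives $sp(m,2)\le ss(m,a_4)=ss(m,abab)$, and the preceding lemma states $ss(m,abab)=\Theta(m^{1/2})$. So the content of the corollary is the matching lower bound $sp(m,2)=\Omega(m^{1/2})$; equivalently, I must show that every sequence $u$ of length $m$ has a subsequence of length at least $m^{1/2}$ that can be realized as the lower envelope of a set of polynomials of degree at most $2$.

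For the lower bound I would reuse the dichotomy from the proof of Lemma \ref{aaabl}. Given $u$ of length $m$, either some letter occurs at least $m^{1/2}$ times in $u$, or $u$ has at least $m^{1/2}$ distinct letters. In the first case, the occurrences of that letter form a constant subsequence $c\,c\cdots c$ of length at least $m^{1/2}$, and this is realized by the single polynomial $f_1(x)=0$: the lower envelope $\min\{f_1\}$ may be written as a piecewise function all of whose pieces are $f_1$, so the associated function sequence is isomorphic to $c\,c\cdots c$. In the second case, choosing one occurrence of each letter yields a subsequence with at least $m^{1/2}$ pairwise distinct letters, which is realized by the parabolas $f_i(x)=(x-i)^2$ for $i=1,\dots,t$: near $x=i$ the shift closest to $x$ is the pointwise minimum, so the lower envelope lists $f_1,f_2,\dots,f_t$ in order, each exactly once, giving a function sequence of $t$ distinct letters. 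In either case $LSP(u,2)\ge m^{1/2}$, hence $sp(m,2)\ge m^{1/2}$.

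The only point needing care — and the reason the introduction flags the cases $k=1,2$ as easy — is checking that these explicit polynomial families realize the stated sequences under the definition of ``realized'' from the introduction: for the constant sequence one must note that the piecewise expression of the lower envelope need not be reduced, so a single polynomial may supply many consecutive pieces; for the distinct-letter sequence one verifies the elementary fact that the graphs of $(x-1)^2,\dots,(x-t)^2$ meet in the order required for each to appear once on the envelope. (Alternatively one could observe that a sequence is $abab$-free if and only if its reduction is an order-$2$ Davenport--Schinzel sequence, and that every order-$2$ Davenport--Schinzel sequence is the lower envelope of a family of parabolas, which yields $LSP(u,2)=LSS(u,abab)$ for every $u$ and hence $sp(m,2)=ss(m,abab)$ outright; but the two special constructions above already suffice.) Combining the two bounds gives $sp(m,2)=\Theta(m^{1/2})$.
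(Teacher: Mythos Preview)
Your argument is correct and matches the paper's intent: the paper states this corollary without proof, relying on the preceding lemma together with the introduction's inequality $sp(m,k)\le ss(m,a_{k+2})$ for the upper bound, while the lower bound is implicitly the same dichotomy you spell out (re-using the proof of Lemma~\ref{aaabl} and checking that the constant and all-distinct subsequences are each realizable by degree-$\le 2$ polynomials). Your care in noting that the paper's definition of ``realized'' permits an unreduced piecewise expression is exactly the point needed to handle the constant-subsequence case.
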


\begin{cor}
$ss(m, a b a) = \Theta(m^{1/2})$ and $sp(m,1) = \Theta(m^{1/2})$
\end{cor}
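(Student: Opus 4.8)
The plan is to prove the two equalities $ss(m, aba) = \Theta(m^{1/2})$ and $sp(m,1) = \Theta(m^{1/2})$ by a combination of the earlier lemmas and a short direct argument, taking advantage of the fact that $aba$ is an even weaker forbidden pattern than $abab$. The lower bound $ss(m,aba) \geq m^{1/2}$ is immediate from Lemma \ref{aaabl}, since $aba$ contains both $aa$ (take the first and third letters after a relabeling? — no: $aba$ does contain $aa$ as a subsequence via positions $1,3$) and $ab$ (positions $1,2$); hence that lemma applies verbatim. For the upper bound, I would reuse the block construction $u = (a_1 \cdots a_k)^k$ with $m = k^2$ from the preceding lemma. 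If $v$ is a subsequence of $u$ avoiding $aba$, then $v$ cannot contain any letter in three different blocks: two occurrences of $a$ with any occurrence of a different letter $b$ strictly between them (which must happen if $a$ spans two or more blocks and $v$ has any other letter active in that range) already gives $aba$ unless $v$ is essentially a single run. More carefully: in an $aba$-free sequence, between the first and last occurrence of any fixed letter, no other letter may occur; so the occurrences of each letter form a contiguous block of $v$, and distinct letters occupy disjoint intervals of $v$. Thus $v$ looks like $x_1^{e_1} x_2^{e_2} \cdots x_j^{e_j}$ with the $x_i$ distinct. The number of distinct letters is at most $k$ (only $k$ symbols exist), and within each block of $u$ a fixed letter appears once, so a run $x_i^{e_i}$ of length $e_i$ forces $e_i$ distinct blocks of $u$; since these runs occur left to right in $v$ and $u$ has only $k$ blocks, $\sum_i e_i \le k + (j-1) \le 2k$ roughly — giving $|v| = O(k) = O(m^{1/2})$, hence $ss(m, aba) = O(m^{1/2})$.

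For $sp(m,1)$: two distinct polynomials of degree at most $1$ (i.e. lines) intersect in at most one point, so any sequence realized as a lower envelope of lines avoids $aba$ — the lower envelope of lines is a convex piecewise-linear function whose pieces are all distinct, so no function can reappear. Hence by the inequality $sp(m,k) \le ss(m, a_{k+2})$ noted in the introduction (with $k=1$, $a_3 = aba$), we get $sp(m,1) \le ss(m, aba) = O(m^{1/2})$. For the matching lower bound $sp(m,1) = \Omega(m^{1/2})$, I would take the same worst-case sequence $u = (a_1 \cdots a_k)^k$ and observe that any subsequence of it of length $\omega(m^{1/2})$ must, by the upper-bound argument just given, contain $aba$, and an $aba$ pattern cannot be realized by a lower envelope of lines; alternatively — and more simply — note that any $aba$-free sequence of length $\ell$ can be realized as a lower envelope of $\ell$ distinct lines (arrange the pieces left to right as tangent lines to a convex curve), so $sp(m,1) = ss(m,aba)$ exactly, and the $\Omega(m^{1/2})$ bound transfers from Lemma \ref{aaabl}.

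The step I expect to be the main obstacle is making the upper-bound counting for $ss(m, aba)$ fully rigorous: specifically, pinning down the exact structural claim that an $aba$-free sequence is a concatenation of runs of distinct letters, and then bounding the total length against the $k$ available blocks of $u$ without an off-by-constant error. The structural claim is standard (it is essentially the observation that $aba$-free sequences over an alphabet are "laminar" in position), but I want to state it cleanly as: if $v$ avoids $aba$ then for each letter $x$, all occurrences of $x$ in $v$ are consecutive. Granting that, the block-counting is the same convexity/pigeonhole flavor as in the $abab$ case but easier, since each run already needs as many blocks as its length. Everything else — the lower bounds and the reduction $sp(m,1) \le ss(m,aba)$ — is either a direct citation of Lemma \ref{aaabl} or the one-line geometric fact that lines meet at most once.
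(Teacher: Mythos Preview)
Your proposal is correct, but for the upper bound on $ss(m,aba)$ you work harder than necessary. The paper treats this as an immediate corollary of the preceding lemma: since every $aba$-free sequence is in particular $abab$-free, one has $LSS(u,aba)\le LSS(u,abab)$ for every $u$, hence $ss(m,aba)\le ss(m,abab)=O(m^{1/2})$; together with Lemma~\ref{aaabl} this gives $ss(m,aba)=\Theta(m^{1/2})$ in one line. Your direct structural argument (an $aba$-free subsequence of $(a_1\cdots a_k)^k$ is a concatenation of runs of distinct letters, and the block count forces total length $\le 2k-1$) is perfectly valid and in fact gives a slightly sharper constant, but the ``main obstacle'' you anticipate is really no obstacle at all once you use monotonicity in the forbidden pattern. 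For $sp(m,1)$, your observation that every $aba$-free sequence can be realized as a lower envelope of lines (so that $sp(m,1)=ss(m,aba)$ exactly) is the right way to get the lower bound and is what the paper leaves implicit; the upper bound $sp(m,1)\le ss(m,a_3)=ss(m,aba)$ is the inequality from the introduction, just as you say.
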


In order to bound $sp(m, i)$ for $i > 2$, we show a general bound on $ss(m, v)$ for every sequence $v$. For these bounds, we use the same fact as in Lemma \ref{light} about $0-1$ matrices.

\begin{lem}
For every sequence $v$, $ss(m, v)$ is quasilinear in $m^{1/2}$.
\end{lem}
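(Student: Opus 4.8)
The plan is to exhibit, for each $m$, a single sequence of length $m$ all of whose $v$-free subsequences are short. Passing to a prefix of a sequence can only decrease $LSS$, so $ss(m,v)$ is nondecreasing in $m$ and it suffices to treat $m=n^{2}$; for such $m$ I would take
\[
u=(a_{1}a_{2}\cdots a_{n})^{n},
\]
the $n$-fold concatenation of a fixed permutation of $n$ letters, which is the sequence analogue of the all-ones $m^{1/2}\times m^{1/2}$ matrix used in Lemma~\ref{smex}. Let $\mathrm{Ex}(n,v)$ denote the maximum length of a $v$-free sequence on at most $n$ letters in which every $\|v\|$ consecutive letters are distinct; the sequence form of the fact invoked in Corollary~\ref{light} (Klazar's general near-linear bound in the extremal theory of sequences) is that $\mathrm{Ex}(n,v)=O\!\left(n\,2^{\alpha(n)^{t}}\right)$ for some constant $t=t(v)$. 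The patterns $v$ that are a power of a single letter are already covered with a constant bound by the first lemma of this section, so one may freely assume $v$ has at least two distinct letters.

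The core step is to show that every $v$-free subsequence $s$ of $u$ has length $O(\mathrm{Ex}(n,v)+n)$. Here I would use the block structure: $s$ is a concatenation $s_{1}s_{2}\cdots s_{n}$, where $s_{j}$ is the part of $s$ selected from the $j$th copy of $a_{1}\cdots a_{n}$, and each $s_{j}$ is a strictly increasing word in $a_{1},\dots,a_{n}$, so no letter repeats inside a single $s_{j}$. Since $s$ is itself $v$-free, the only obstruction to applying $\mathrm{Ex}(n,v)$ directly is that $s$ need not be $\|v\|$-sparse — at a block boundary the same letter can recur after fewer than $\|v\|$ steps. So I would pass from $s$ to a maximal $\|v\|$-sparse subsequence $\hat s$ (scan $s$ left to right, keeping a letter iff it differs from the last $\|v\|-1$ letters kept). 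Then $\hat s$ is $\|v\|$-sparse, uses at most $n$ letters, and is still $v$-free, hence $|\hat s|\le \mathrm{Ex}(n,v)$; and a charging argument bounds $|s|-|\hat s|$: each deleted occurrence of a letter $x$ lies in a window of $s$ containing fewer than $\|v\|$ kept letters, and since each copy of $x$ occupies a distinct block, the deletions charged to any one kept letter number at most the blocks that window meets. Summing over kept letters and over blocks gives $|s|-|\hat s|=O(\|v\|\,n+|\hat s|)$, so $|s|=O(\mathrm{Ex}(n,v)+n)=O\!\left(n\,2^{\alpha(n)^{t}}\right)$ and therefore
\[
ss(m,v)\le LSS(u,v)=O\!\left(m^{1/2}\,2^{\alpha(m^{1/2})^{t}}\right),
\]
which is quasilinear in $m^{1/2}$.

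I expect the sparsification step to be the main obstacle. Unlike the $0$–$1$ matrix setting — where deleting rows and columns cannot create new adjacencies, so Lemma~\ref{smex} is essentially immediate — taking a subsequence of $u$ can introduce repetitions of a letter at distance far below $\|v\|$, so one cannot feed $s$ directly into the Davenport–Schinzel bound; the block structure of $u$ (each letter at most once per block) must be exploited to keep the number of deleted letters linear in $n+|\hat s|$. A secondary point to verify is that $\mathrm{Ex}(n,v)$ is finite for the relevant $v$, which is exactly why the $\|v\|$-sparseness convention, rather than merely forbidding immediate repetitions, is the right framework here.
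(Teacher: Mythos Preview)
Your approach is correct but differs from the paper's. You work entirely in the sequence world: take the same witness $u=(a_{1}\cdots a_{n})^{n}$, feed a $v$-free subsequence $s$ into the generalized Davenport--Schinzel bound $\mathrm{Ex}(n,v)$, and handle the lack of $\|v\|$-sparseness by a greedy sparsification plus a block-based charging (which can be made crisp: each deleted letter must equal one of the $\|v\|-1$ kept letters carried over from earlier blocks, and since letters within a block are distinct this gives at most $\|v\|-1$ deletions per block, hence $|s|-|\hat s|\le(\|v\|-1)n$). The paper instead sidesteps sparsification entirely by passing to $0$--$1$ matrices: it records $s$ as a $k\times k$ matrix $A$ with a $1$ in position $(i,j)$ iff $a_{i}$ occurs in block $j$ of $s$, encodes $v$ as a matrix $P$ with exactly one $1$ per column, observes that $A$ must avoid $P$, and invokes the quasilinear bound on $ex(k,P)$ from Corollary~\ref{light}. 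The matrix route is shorter and avoids the sparsification bookkeeping you correctly flagged as the main obstacle; your route has the virtue of staying within the sequence framework and making explicit where the block structure of $u$ is used.
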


\begin{proof}
Let $m = k^{2}$ be a perfect square and consider the sequence $u$ of length $m$ such that $u = (a_{1} \ldots a_{k})^{k}$. Again, let each disjoint copy of $(a_{1} \ldots a_{k})$ in $u$ be called a block.

Let $s$ be any subsequence of $u$ that avoids $v$. Create a $0-1$ matrix $A$ from $s$ with $k$ columns and $k$ rows that has a one in row $i$ and column $j$ if $s$ has letter $a_{i}$ in block $j$. 

Let $t$ be the length of $v$, let $r$ be the number of distinct letters in $v$, and let the letters of $v$ be $l_{1},\ldots,l_{r}$. Let $P$ be the $r \times t$ $0-1$ matrix such that $P$ has a one in row $i$ and column $j$ if the $j^{th}$ letter of $v$ is $l_{i}$. Then $A$ avoids $P$, or else $s$ would contain $v$. Since $ex(n, P)$ is quasilinear in $n$ as in Lemma \ref{light}, $ss(m, v)$ is quasilinear in $m^{1/2}$.
\end{proof}

\begin{cor}
$sp(m, k)$ is quasilinear in $m^{1/2}$ for every $k > 0$.
\end{cor}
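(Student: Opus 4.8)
The plan is to derive the corollary directly from the two facts already assembled in the paper: the inequality $sp(m,k) \leq ss(m, a_{k+2})$ noted in the introduction (valid because two polynomials of degree at most $k$ intersect in at most $k$ points, so no lower envelope of such polynomials can realize the alternation $a_{k+2}$ of length $k+2$), and the preceding lemma that $ss(m,v)$ is quasilinear in $m^{1/2}$ for every fixed sequence $v$. Applying the latter with $v = a_{k+2}$ gives a constant $t = t(k)$ with $ss(m, a_{k+2}) = O(m^{1/2} 2^{\alpha(m^{1/2})^{t}})$, and chaining the inequality yields $sp(m,k) = O(m^{1/2} 2^{\alpha(m^{1/2})^{t}})$, which is exactly the (strong) notion of quasilinear in $m^{1/2}$ defined in the introduction.

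The steps, in order: first, state the inequality $sp(m,k) \leq ss(m, a_{k+2})$ and recall its one-line justification via the intersection bound for degree-$k$ polynomials — this is the only place the geometry of lower envelopes enters, and it has already been flagged in the introduction, so it can be cited rather than reproved. Second, invoke the previous lemma with $v = a_{k+2}$ to get the quasilinear bound on $ss(m, a_{k+2})$; since $k$ is fixed, $a_{k+2}$ is a fixed sequence and the constant $t$ in the definition of quasilinear depends only on $k$. Third, combine the two to conclude. One should also note $\alpha(m^{1/2}) = \Theta(\alpha(m))$ (the inverse Ackermann function is essentially invariant under taking square roots), so the bound can equivalently be written as $O(m^{1/2} 2^{\alpha(m)^{t}})$ if that form is preferred.

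I do not expect a genuine obstacle here: the corollary is a packaging of results proved immediately above, and the only subtlety worth a sentence is making sure the reader sees that $k$ being a constant is what lets the sequence $a_{k+2}$ — and hence the exponent $t$ coming from the bound on $ss$ — be treated as fixed. If anything, the "hard part" is purely expository: being explicit that the quasilinear-in-$m^{1/2}$ bound for $ss(m, a_{k+2})$ transfers to $sp(m,k)$ unchanged because $sp(m,k)$ is sandwiched below it, with no loss beyond the constant factors already absorbed into the $O(\cdot)$.
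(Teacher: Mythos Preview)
Your proposal is correct and matches the paper's approach exactly: the corollary is obtained by combining the inequality $sp(m,k) \leq ss(m, a_{k+2})$ from the introduction with the immediately preceding lemma that $ss(m,v)$ is quasilinear in $m^{1/2}$ for every fixed $v$. The paper offers no additional argument beyond this chaining, so your write-up is, if anything, more detailed than necessary.
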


\section{Open Problems}

Many questions are left unanswered about the functions $sm$, $ss$, and $sp$. For example, what are the $0-1$ matrices $P$ such that $sm(m, P) = \Theta(m^{1/2})$? In this paper, we identified the matrices $P$ for which $ex(n, P) = O(n)$ and $sm(m, P) \neq \Theta(m^{1/2})$. Are there any matrices $P$ for which $ex(n, P) = \omega(n)$ and $sm(m, P) = \Theta(m^{1/2})$?

We also showed that $ss(m, a b a) = \Theta(m^{1/2})$ and $ss(m, a b a b) = \Theta(m^{1/2})$, but the bounds are not tight for alternations of length at least $5$. If $u$ is an alternation of length at least $5$, then $ss(m, u)$ has an obvious lower bound of $\Omega(m^{1/2})$ and an upper bound that is quasilinear in $m^{1/2}$. Is there a lower bound on $ss(m, u)$ that is nonlinear in $m^{1/2}$?

The same questions about $ss$ and alternations can be asked about $sp$ and polynomials of degree at least $3$. We showed that $sp(m, 1) = \Theta(m^{1/2})$ and $sp(m, 2) = \Theta(m^{1/2})$, but the bounds on $sp(m, k)$ are not tight for $k > 2$. As with $ss$, the upper bounds on $sp(m, k)$ are quasilinear in $m^{1/2}$ for $k > 2$. Is there a lower bound on $sp(m, k)$ that is nonlinear in $m^{1/2}$ for $k > 2$?

Another problem is to find a faster algorithm for computing $LSP$: given a lower envelope in the form of an arbitrary sequence $u$, what is the maximum length of any subsequence of $u$ that can be realized as the lower envelope of a set of polynomials of degree at most $d$? There are obvious deterministic brute-force algorithms for $LSP$, but what is the minimum worst-case time complexity for a deterministic algorithm to solve $LSP$? The same question can be asked for $LSM$ and $LSS$. In addition, are there efficient algorithms for approximating $LSP$, $LSS$, and $LSM$ up to an additive or multiplicative factor?

\end{document}